\newtheorem{theorem}{Theorem}
\newtheorem{definition}[theorem]{Definition}
\newtheorem{example}[theorem]{Example}
\newtheorem{lemma}[theorem]{Lemma}
\newtheorem{proposition}[theorem]{Proposition}
\newenvironment{proof}[1][Proof]{\noindent\textbf{#1.} }{\ \rule{0.5em}{0.5em}}
\begin{document}

\title{The Similarity Invariants of non-lightlike curves in the Minkowski 3-space}
\author{Hakan \c{S}im\c{s}ek
\and Mustafa \"{O}zdemir}
\maketitle

\begin{abstract}
In this paper, we firstly introduce the group of similarity transformations in
the Minkowski-3 space. We describe differential- geometric invariants of a
non-lightlike curve according to the group of similarity transformations of
the Minkowski 3-space. We show extension of fundamental theorem for
non-lightlike curves under the group of similarity of the Minkowski 3-space.

\qquad\ 

\textbf{Keywords : Minkowski space, Similarity invariants, non-lightlike
curves. }

\textbf{MSC 2010 : 53A35, 53A55, 53B30.}

\end{abstract}

\section{\textbf{Introduction}}

\qquad A similarity transformation (or similitude) of Euclidean space, which
consists of a rotation, a translation and an isotropic scaling, is an
automorphism preserving the angles and ratios between lengths. The geometric
properties unchanged by similarity transformations is called the
\textit{similarity geometry}. The whole Euclidean geometry can be considered
as a glass of similarity geometry. The similarity transformations are studying
in most area of the pure and applied mathematics.

\qquad Curve matching is an important research area in the computer vision and
pattern recognition, which can help us determine what category the given test
curve belongs to. Also, the recognition and pose determination of 3D objects
can be represented by space curves is important for industry automation,
robotics, navigation and medical applications. S. Li \cite{vision2} showed an
invariant representation based on so-called similarity-invariant coordinate
system (SICS) for matching 3D space curves under the group of similarity
transformations. He also \cite{vision3} presented a system for matching and
pose estimation of 3D space curves under the similarity transformation. Brook
et al. \cite{vision00} discussed various problems of image processing and
analysis by using the similarity transformation. Sahbi \cite{vision0}
investigated a method for shape description based on kernel principal
component analysis (KPCA) in the similarity invariance of KPCA. There are many
applications of the similarity transformation in the computer vision and
pattern recognition (see also \cite{vision01, vision1}).

\qquad The idea of self-similarity is one of the most basic and fruitful ideas
in mathematics. A self-similar object is exactly similar to a part of itself,
which in turn remains similar to a smaller part of itself, and so on. In the
last few decades it established itself as the central notion in areas such as
fractal geometry, dynamical systems, computer networks and statistical
physics. Mandelbrot presented the first description of self-similar sets,
namely sets that may be expressed as unions of rescaled copies of themselves.
He called these sets fractals, which are systems that present such
self-similar behavior and the examples in nature are many. The Cantor set, the
von Koch snowflake curve and the Sierpinski gasket are some of the most famous
examples of such sets. Hutchinson and, shortly thereafter, Barnsley and Demko
showed how systems of contractive maps with associated probabilities, referred
to as Iterated Function Systems (IFS), can be used to construct fractal,
self-similar sets and measures supported on such sets (see \cite{fractal,
fractal1, fractal2, fractal3, fractal30}).

\qquad When Euclidean 3-space is endowed with Lorentzian inner product, we
obtain \textit{Lorentzian similarity geometry. }Lorentzian flat
geometry\textit{ }is inside the\textit{ }Lorentzian similarity
geometr\textit{y}.\textit{\ }Kamishima \cite{simlorentz}\textit{ }studied the
properties of compact Lorentzian similarity manifolds using developing maps
and holonomy representations. The geometric invariants of curves in the
Lorentzian similarity geometry have not been considered so far. The theme of
similarity and self-similarity will be interesting in the Lorentzian-Minkowski space.

\qquad Many integrable equations, like Korteweg-de Vries (mKdV), sine-Gordon
and nonlinear Schr\"{o}dinger (NLS) equations, in soliton theory have been
shown to be related to motions of inextensible curves in the Euclidean space.
By using the similarity invariants of curves under the similarity motion, KS.
Chou and C. Qu \cite{chaos} showed that the motions of curves in two-, three-
and n-dimensional $(n>3)$ similarity geometries correspond to the Burgers
hierarchy, Burgers-mKdV hierarchy and a multi-component generalization of
these hierarchies in $\mathbb{E}^{n}$. Moreover, to study the motion of curves
in the Minkowski space also attracted researchers' interest. G\"{u}rses
\cite{gurses} studied the motion of curves on two-dimensional surface in
Minkowski 3-space. Q. Ding and J. Inoguchi \cite{inoguchi} showed that
binormal motions of curves in Minkowski 3-space are equivalent to some
integrable equations Therefore, the current paper will contribute to study the
motion of curves with similarity invariants in $\mathbb{E}_{1}^{3}$.

\qquad The broad content of similarity transformations were given by
\cite{berger} in arbitrary-dimensional Euclidean spaces. Differential
geometric invariants of Frenet curves up to the group of similarities were
studied by \cite{shape} in the Euclidean 3-space. In current paper, Lorentzian
version of similarity transformations will be entitled by pseudo-similarity
transformation defined by $\left(  \ref{05}\right)  $ in the section 2. The
main idea of this paper is to extend the fundamental theorem for a non-null
curve with respect to p-similarity motion and determine non-null self-similar
curves in the Minkowski 3-space.

\qquad The content of paper is as follows. We prove that p-similarity
transformations preserve the causal characters of vectors and the angles in
$\mathbb{E}_{1}^{3}$. We examine invariants of a non-lightlike Frenet curve up
to the group of p-similarities. We also show the relationship between the
focal curvatures and these invariants for non-lightlike Frenet curves in
$\mathbb{E}_{1}^{3}$. We give the uniqueness theorem which states that two
non-lightlike Frenet curves having same the p-shape curvature and same the
p-shape torsion are equivalent modulo a p-similarity. Furthermore, we obtain
the existence theorem that is a procedure for construction of a non-lightlike
Frenet curve by means of its p-shape curvature and p-shape torsion under some
initial conditions. Lastly, we give examples about construction of a
non-lightlike Frenet curve with a given p-shape.

\section{The Fundamental Group of Lorentzian Similarity Geometry}

\qquad Firstly, let us give some basic notions of the Lorentzian geometry. Let
$\mathbf{x}=\left(  x_{1},x_{2},x_{3}\right)  ^{T},$ $\mathbf{y}=\left(
y_{1},y_{2},y_{3}\right)  ^{T}$ and $\mathbf{z}=\left(  z_{1},z_{2}%
,z_{3}\right)  ^{T}$ be three arbitrary vectors in the Minkowski space
$\mathbb{E}_{1}^{3}.$ The Lorentzian inner product of $\mathbf{x}$ and
$\mathbf{y}$ can be stated as $\mathbf{x}\cdot\mathbf{y}=\mathbf{x}^{T}%
I^{\ast}\mathbf{y}$ where $I^{\ast}=diag(-1,1,1).$ The vector $\mathbf{x}$ in
$\mathbb{E}_{1}^{3}$ is called a spacelike vector, lightlike (or null) vector
and timelike vector if $\mathbf{x}\cdot\mathbf{x}>0$ or $\mathbf{x}=0,$
$\mathbf{x}\cdot\mathbf{x}=0$ or $\mathbf{x}\cdot\mathbf{x}<0,$ respectively.
The norm of the vector $\mathbf{x}$ is described by $\left\Vert \mathbf{x}%
\right\Vert =\sqrt{\left\vert \mathbf{x}\cdot\mathbf{x}\right\vert }.$ The
Lorentzian vector product $\mathbf{x}\times\mathbf{y}$ of $\mathbf{x}$ and
$\mathbf{y}$ is defined as follows:%
\[
\mathbf{x}\times\mathbf{y=}%
\begin{bmatrix}
-i & j & k\\
x_{1} & x_{2} & x_{3}\\
y_{1} & y_{2} & y_{3}%
\end{bmatrix}
\]
The hyperbolic and Lorentzian unit spheres are
\[
H_{0}^{2}=\left\{  \mathbf{x}\in\mathbb{E}_{1}^{3}:\mathbf{x\cdot
x=}-1\right\}  \text{ and }S_{1}^{2}=\left\{  \mathbf{x}\in\mathbb{E}_{1}%
^{3}:\mathbf{x\cdot x=}1\right\}
\]
respectively. There are two components $H_{0}^{2}$ passing through $\left(
1,0,0\right)  $ and $\left(  -1,0,0\right)  $ a future pointing hyperbolic
unit sphere a past pointing hyperbolic unit sphere, and they are denoted by
$H_{0}^{2+}$ and $H_{0}^{2-}$, respectively (see \cite{semi riemann} and
\cite{grub}).

\begin{theorem}
\label{min}Let $\mathbf{x}$ and $\mathbf{y}$ be vectors in the Minkowski 3-space.

$\left(  i\right)  $ If $\mathbf{x}$ and $\mathbf{y}$ are future-pointing (or
past-pointing) timelike vectors, then $\mathbf{x}\times\mathbf{y}$ is a
spacelike vector, $\mathbf{x\cdot y}=-\left\Vert \mathbf{x}\right\Vert
\left\Vert \mathbf{y}\right\Vert \cosh\theta$ and $\mathbf{x}\times
\mathbf{y=}\left\Vert \mathbf{x}\right\Vert \left\Vert \mathbf{y}\right\Vert
\sinh\theta$ where $\theta$ is the hyperbolic angle between $\mathbf{x}$ and
$\mathbf{y.}$

$\left(  ii\right)  $ If $\mathbf{x}$ and $\mathbf{y}$ are spacelike vectors
satisfying the inequality $\left\vert \mathbf{x\cdot y}\right\vert <\left\Vert
\mathbf{x}\right\Vert \left\Vert \mathbf{y}\right\Vert ,$ then $\mathbf{x}%
\times\mathbf{y}$ is timelike, $\mathbf{x\cdot y}=\left\Vert \mathbf{x}%
\right\Vert \left\Vert \mathbf{y}\right\Vert \cos\theta$ and $\mathbf{x}%
\times\mathbf{y=}\left\Vert \mathbf{x}\right\Vert \left\Vert \mathbf{y}%
\right\Vert \sin\theta$ where $\theta$ is the angle between $\mathbf{x}$ and
$\mathbf{y.}$

$\left(  iii\right)  $ If $\mathbf{x}$ and $\mathbf{y}$ are spacelike vectors
satisfying the inequality $\left\vert \mathbf{x\cdot y}\right\vert >\left\Vert
\mathbf{x}\right\Vert \left\Vert \mathbf{y}\right\Vert ,$ then $\mathbf{x}%
\times\mathbf{y}$ is spacelike, $\mathbf{x\cdot y}=\left\Vert \mathbf{x}%
\right\Vert \left\Vert \mathbf{y}\right\Vert \cosh\theta$ and $\mathbf{x}%
\times\mathbf{y=}\left\Vert \mathbf{x}\right\Vert \left\Vert \mathbf{y}%
\right\Vert \sinh\theta$ where $\theta$ is the hyperbolic angle between
$\mathbf{x}$ and $\mathbf{y.}$

$\left(  iv\right)  $ If $\mathbf{x}$ and $\mathbf{y}$ are spacelike vectors
satisfying the equality $\left\vert \mathbf{x\cdot y}\right\vert =\left\Vert
\mathbf{x}\right\Vert \left\Vert \mathbf{y}\right\Vert ,$ then $\mathbf{x}%
\times\mathbf{y}$ is lightlike.
\end{theorem}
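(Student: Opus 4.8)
The plan is to reduce all four cases to a single Lorentzian Lagrange identity. First I would expand the defining determinant to obtain $\mathbf{x}\times\mathbf{y}=\big(-(x_2y_3-x_3y_2),\,-(x_1y_3-x_3y_1),\,x_1y_2-x_2y_1\big)$ and compute its Lorentzian self-product $-w_1^2+w_2^2+w_3^2$; after collecting terms this should yield
\[
(\mathbf{x}\times\mathbf{y})\cdot(\mathbf{x}\times\mathbf{y})=(\mathbf{x}\cdot\mathbf{y})^2-(\mathbf{x}\cdot\mathbf{x})(\mathbf{y}\cdot\mathbf{y}).
\]
This identity is the computational heart of the argument, though it is routine bookkeeping. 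Its right-hand side controls the sign of $(\mathbf{x}\times\mathbf{y})\cdot(\mathbf{x}\times\mathbf{y})$, hence the causal character of $\mathbf{x}\times\mathbf{y}$, while its absolute value delivers the norm formulas (here I read the statement $\mathbf{x}\times\mathbf{y}=\left\Vert\mathbf{x}\right\Vert\left\Vert\mathbf{y}\right\Vert\sinh\theta$ as an assertion about $\left\Vert\mathbf{x}\times\mathbf{y}\right\Vert$).

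For part $(i)$, with $\mathbf{x},\mathbf{y}$ timelike I would use $\mathbf{x}\cdot\mathbf{x}=-\left\Vert\mathbf{x}\right\Vert^2$ and $\mathbf{y}\cdot\mathbf{y}=-\left\Vert\mathbf{y}\right\Vert^2$. The hard part is justifying the representation $\mathbf{x}\cdot\mathbf{y}=-\left\Vert\mathbf{x}\right\Vert\left\Vert\mathbf{y}\right\Vert\cosh\theta$: one must show that two future-pointing (resp. past-pointing) timelike vectors obey the reversed Cauchy--Schwarz inequality $\left\vert\mathbf{x}\cdot\mathbf{y}\right\vert\ge\left\Vert\mathbf{x}\right\Vert\left\Vert\mathbf{y}\right\Vert$ and that $\mathbf{x}\cdot\mathbf{y}<0$, so that a unique $\theta\ge 0$ exists. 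I would get both by writing each vector as a positive time component plus a spatial part and applying the ordinary Cauchy--Schwarz inequality to the spatial parts. Substituting into the Lagrange identity then gives $\left\Vert\mathbf{x}\right\Vert^2\left\Vert\mathbf{y}\right\Vert^2(\cosh^2\theta-1)=\left\Vert\mathbf{x}\right\Vert^2\left\Vert\mathbf{y}\right\Vert^2\sinh^2\theta\ge 0$, so $\mathbf{x}\times\mathbf{y}$ is spacelike with $\left\Vert\mathbf{x}\times\mathbf{y}\right\Vert=\left\Vert\mathbf{x}\right\Vert\left\Vert\mathbf{y}\right\Vert\sinh\theta$.

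Parts $(ii)$--$(iv)$ are immediate specializations, since for spacelike $\mathbf{x},\mathbf{y}$ we have $\mathbf{x}\cdot\mathbf{x}=\left\Vert\mathbf{x}\right\Vert^2$ and $\mathbf{y}\cdot\mathbf{y}=\left\Vert\mathbf{y}\right\Vert^2$, whence the identity collapses to $(\mathbf{x}\times\mathbf{y})\cdot(\mathbf{x}\times\mathbf{y})=(\mathbf{x}\cdot\mathbf{y})^2-\left\Vert\mathbf{x}\right\Vert^2\left\Vert\mathbf{y}\right\Vert^2$. The trichotomy in the hypotheses is exactly the sign trichotomy of this expression: when $\left\vert\mathbf{x}\cdot\mathbf{y}\right\vert<\left\Vert\mathbf{x}\right\Vert\left\Vert\mathbf{y}\right\Vert$ the representation $\mathbf{x}\cdot\mathbf{y}=\left\Vert\mathbf{x}\right\Vert\left\Vert\mathbf{y}\right\Vert\cos\theta$ turns it into $-\left\Vert\mathbf{x}\right\Vert^2\left\Vert\mathbf{y}\right\Vert^2\sin^2\theta\le 0$, forcing $\mathbf{x}\times\mathbf{y}$ timelike with norm $\left\Vert\mathbf{x}\right\Vert\left\Vert\mathbf{y}\right\Vert\sin\theta$; when $\left\vert\mathbf{x}\cdot\mathbf{y}\right\vert>\left\Vert\mathbf{x}\right\Vert\left\Vert\mathbf{y}\right\Vert$ the representation $\mathbf{x}\cdot\mathbf{y}=\left\Vert\mathbf{x}\right\Vert\left\Vert\mathbf{y}\right\Vert\cosh\theta$ turns it into $\left\Vert\mathbf{x}\right\Vert^2\left\Vert\mathbf{y}\right\Vert^2\sinh^2\theta\ge 0$, making $\mathbf{x}\times\mathbf{y}$ spacelike with norm $\left\Vert\mathbf{x}\right\Vert\left\Vert\mathbf{y}\right\Vert\sinh\theta$; and when $\left\vert\mathbf{x}\cdot\mathbf{y}\right\vert=\left\Vert\mathbf{x}\right\Vert\left\Vert\mathbf{y}\right\Vert$ it vanishes, so $\mathbf{x}\times\mathbf{y}$ is lightlike. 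I expect the only genuine obstacle to be the reversed Cauchy--Schwarz step in part $(i)$; once the Lagrange identity and that inequality are in place, the rest is sign bookkeeping against the hypotheses.
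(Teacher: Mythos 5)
Your plan is mathematically sound, but there is nothing in the paper to measure it against: Theorem~\ref{min} is stated there as background material and never proved --- it is quoted as a known result from the cited literature (O'Neill's semi-Riemannian geometry text and the related references), so your argument is a genuine, self-contained replacement rather than a variant of an in-paper proof. The route you chose is the standard one and it does work. The Lorentzian Lagrange identity $(\mathbf{x}\times\mathbf{y})\cdot(\mathbf{x}\times\mathbf{y})=(\mathbf{x}\cdot\mathbf{y})^{2}-(\mathbf{x}\cdot\mathbf{x})(\mathbf{y}\cdot\mathbf{y})$ is correct for the signature $(-,+,+)$ and for the determinant definition of $\times$ used in the paper (a direct expansion of both sides confirms it), the sign of its right-hand side does govern the causal character, and your treatment of part $(i)$ correctly isolates the real content: the reversed Cauchy--Schwarz inequality together with $\mathbf{x}\cdot\mathbf{y}<0$ for two future-pointing (or two past-pointing) timelike vectors, both of which follow from your time-plus-spatial decomposition via the ordinary Cauchy--Schwarz inequality, since $\left(x_{1}y_{1}-\left\vert \vec{x}\right\vert \left\vert \vec{y}\right\vert \right)^{2}-\left(x_{1}^{2}-\left\vert \vec{x}\right\vert ^{2}\right)\left(y_{1}^{2}-\left\vert \vec{y}\right\vert ^{2}\right)=\left(x_{1}\left\vert \vec{y}\right\vert -y_{1}\left\vert \vec{x}\right\vert \right)^{2}\geq0$. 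Two small caveats, both traceable to the statement rather than to your argument. First, in part $(iii)$ two spacelike vectors with $\left\vert \mathbf{x}\cdot\mathbf{y}\right\vert >\left\Vert \mathbf{x}\right\Vert \left\Vert \mathbf{y}\right\Vert $ can have $\mathbf{x}\cdot\mathbf{y}<0$ (take $\mathbf{x}=(1,2,0)$, $\mathbf{y}=(1,-2,0)$), so the representation you invoke should really read $\left\vert \mathbf{x}\cdot\mathbf{y}\right\vert =\left\Vert \mathbf{x}\right\Vert \left\Vert \mathbf{y}\right\Vert \cosh\theta$; your causal-character conclusion is unaffected, since it only uses $(\mathbf{x}\cdot\mathbf{y})^{2}>\left\Vert \mathbf{x}\right\Vert ^{2}\left\Vert \mathbf{y}\right\Vert ^{2}$. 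Second, in part $(iv)$ the equality case includes proportional spacelike vectors, for which $\mathbf{x}\times\mathbf{y}=0$; under the paper's conventions the zero vector is spacelike, not lightlike, so the lightlike conclusion tacitly assumes $\mathbf{x}$ and $\mathbf{y}$ are linearly independent. Neither point is a gap in your proposal; both are imprecisions you inherit from the theorem as stated.
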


\qquad Now, we define similarity transformation in $\mathbb{E}_{1}^{3}%
.$\ A\emph{ pseudo-similarity (in short p-similarity)} of Minkowski 3-space
$\mathbb{E}_{1}^{3}$ is a decomposition of a homothety (dilatation) a
pseudo-orthogonal map and a translation. Let $\mathbb{\hat{H}}$ be the split
quaternion algebra and $\mathbb{T\hat{H}}$ be the set of timelike split
quaternions such that we identify $\mathbb{E}_{1}^{3}$ with Im$\mathbb{\hat
{H}}$. $\mathbb{T\hat{H}}$ forms a group under the split quaternion product. A
unit timelike split quaternion represents a rotation in the Minkowski 3-space.
Therefore, by \cite{rotation}, there exists a unit timelike split quaternion
$q$ such that the transformation $\mathbf{R}_{q}:$ Im$\mathbb{T\hat
{H}\rightarrow}$ Im$\mathbb{T\hat{H}}$ defined by
\[
\mathbf{R}_{q}\left(  r\right)  =qrq^{-1}%
\]
can interpret rotation of a vector in the Minkowski 3-space. Thus, we get
\begin{equation}
f\left(  r\right)  =\mu qrq^{-1}+\mathbf{b}\label{05}%
\end{equation}
for some fixed $\mu\neq0\in%
\mathbb{R}
$ and $\mathbf{b}\in$Im$\mathbb{\hat{H}\cong}\mathbb{E}_{1}^{3}$. Since $f$ is
a affine map, we get $\left\Vert \vec{f}\left(  \mathbf{u}\right)  \right\Vert
=\left\vert \mu\right\vert \left\Vert \mathbf{u}\right\Vert $ for any
$\mathbf{u}\in\mathbb{E}_{1}^{3}$ where $\vec{f}\left(  \overrightarrow
{xy}\right)  =\overrightarrow{f(x)f(y)}$ (see \cite{berger}). The constant
$\left\vert \mu\right\vert $ is called a p-similarity ratio of the
transformation $f$. The p-similarity transformations are a group under the
composition of maps and denoted by \textbf{Sim}$\left(  \mathbb{E}_{1}%
^{3}\right)  $. This group is a fundamental group of the Lorentzian similarity
geometry. Also, the group of orientation-preserving (reversing) p-similarities
are denoted by \textbf{Sim}$^{+}\left(  \mathbb{E}_{1}^{3}\right)  $
(\textbf{Sim}$^{-}\left(  \mathbb{E}_{1}^{3}\right)  ,$ resp. ).

\begin{theorem}
The p-similarity transformations preserve the causal characters and angles.
\end{theorem}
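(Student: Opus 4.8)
The plan is to reduce everything to the linear part of $f$, since causal characters and angles are attributes of vectors rather than of points, so the translation $\mathbf{b}$ is irrelevant. Writing the linear part as $\vec{f}\left( \mathbf{u}\right) =\mu q\mathbf{u}q^{-1}=\mu \mathbf{R}_{q}\left( \mathbf{u}\right) $, the single computation that drives the whole proof is the identity
\begin{equation}
\vec{f}\left( \mathbf{u}\right) \cdot \vec{f}\left( \mathbf{v}\right) =\mu ^{2}\left( \mathbf{u}\cdot \mathbf{v}\right) ,\qquad \mathbf{u},\mathbf{v}\in \mathbb{E}_{1}^{3}.
\end{equation}
To obtain it I would invoke the fact, recalled just before $\left( \ref{05}\right) $, that a unit timelike split quaternion $q$ induces a rotation $\mathbf{R}_{q}$, that is, a pseudo-orthogonal map satisfying $\mathbf{R}_{q}\left( \mathbf{u}\right) \cdot \mathbf{R}_{q}\left( \mathbf{v}\right) =\mathbf{u}\cdot \mathbf{v}$ by \cite{rotation}; multiplying by the scalar $\mu $ then rescales the inner product by $\mu ^{2}$.

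The statement about causal characters is immediate from this identity. Since $\mu \neq 0$ forces $\mu ^{2}>0$, the sign of $\vec{f}\left( \mathbf{u}\right) \cdot \vec{f}\left( \mathbf{u}\right) =\mu ^{2}\left( \mathbf{u}\cdot \mathbf{u}\right) $ coincides with the sign of $\mathbf{u}\cdot \mathbf{u}$. Hence $\vec{f}\left( \mathbf{u}\right) $ is spacelike, timelike or lightlike exactly when $\mathbf{u}$ is, and in passing the relation $\left\Vert \vec{f}\left( \mathbf{u}\right) \right\Vert =\left\vert \mu \right\vert \left\Vert \mathbf{u}\right\Vert $ of the preceding discussion is recovered.

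For the angles I would run through the cases of Theorem \ref{min} in turn. In cases $\left( i\right) $, $\left( ii\right) $ and $\left( iii\right) $ the defining relation exhibits $\cosh \theta $, $\cos \theta $ or $\cosh \theta $ as $\pm \mathbf{x}\cdot \mathbf{y}$ divided by $\left\Vert \mathbf{x}\right\Vert \left\Vert \mathbf{y}\right\Vert $; applying $\vec{f}$ multiplies numerator and denominator alike by $\mu ^{2}$, so the ratio, and therefore $\theta $, is left unchanged. Moreover the type-distinguishing inequalities $\left\vert \mathbf{x}\cdot \mathbf{y}\right\vert \lessgtr \left\Vert \mathbf{x}\right\Vert \left\Vert \mathbf{y}\right\Vert $ separating cases $\left( ii\right) $, $\left( iii\right) $, $\left( iv\right) $ are preserved, since both sides scale by $\mu ^{2}>0$, so a vector pair never migrates between the cases of Theorem \ref{min} and the correct angle formula keeps applying to its image. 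The one point requiring care is the future/past distinction in case $\left( i\right) $: because $\mathbf{x}$ and $\mathbf{y}$ are multiplied by the \emph{same} scalar $\mu $, their images lie in a common time cone (both future- or both past-pointing), so the hyperbolic-angle formula continues to hold with the same sign convention. This orientation bookkeeping is the only genuine obstacle; the remaining steps are the routine substitutions described above.
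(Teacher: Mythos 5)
Your proposal is correct and follows essentially the same route as the paper: the scaling identity $\vec{f}(\mathbf{u})\cdot\vec{f}(\mathbf{v})=\mu^{2}\left(\mathbf{u}\cdot\mathbf{v}\right)$ gives preservation of causal characters at once, and the angle statement follows by running through the cases of Theorem \ref{min}, noting that the case-defining quantities all scale by $\mu^{2}>0$. Your explicit handling of the future/past-pointing issue in case $\left(i\right)$ is in fact slightly more careful than the paper's proof, which only asserts that the images ``have same causal characters'' before applying the hyperbolic-angle formula.
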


\begin{proof}
Let $f$ be a p-similarity. Then, since we can write the equation
\begin{equation}
\vec{f}(\mathbf{u})\cdot\vec{f}(\mathbf{u})=\mu^{2}\left(  \mathbf{u\cdot
u}\right)  , \label{06}%
\end{equation}
$f$ preserves the causal character in $\mathbb{E}_{1}^{3}.$

\qquad Let $\mathbf{u}$ and $\mathbf{v}$ be future-pointing (or past-pointing)
timelike vectors and $\theta,$ $\gamma$ be the angle between $\mathbf{u}$,
$\mathbf{v}$ and $\vec{f}(\mathbf{u)}$, $\vec{f}(\mathbf{v)}$ respectively.
Since $\vec{f}(\mathbf{u)}$ and $\vec{f}(\mathbf{v)}$ have same causal
characters with $\mathbf{u}$ and $\mathbf{v,}$ we can find the following
equation from Theorem $\ref{min};$%
\begin{align}
\vec{f}(\mathbf{u})\cdot\vec{f}(\mathbf{v})  &  =-\left\Vert \vec
{f}(\mathbf{u})\right\Vert \left\Vert \vec{f}(\mathbf{v})\right\Vert
\cosh\gamma\label{011}\\
\mu^{2}\left(  \mathbf{u\cdot v}\right)   &  =-\mu^{2}\left\Vert
\mathbf{u}\right\Vert \left\Vert \mathbf{v}\right\Vert \cosh\gamma\nonumber\\
-\left\Vert \mathbf{u}\right\Vert \left\Vert \mathbf{v}\right\Vert \cosh\theta
&  =-\left\Vert \mathbf{u}\right\Vert \left\Vert \mathbf{v}\right\Vert
\cosh\gamma\nonumber\\
\cosh\theta &  =\cosh\gamma.\nonumber
\end{align}
From here, we have $\theta=\gamma.$ If $\mathbf{u}$ and $\mathbf{v}$ are
spacelike vectors satisfying the inequality $\left\vert \mathbf{u\cdot
v}\right\vert <\left\Vert \mathbf{u}\right\Vert \left\Vert \mathbf{v}%
\right\Vert ,$ then
\[
\left\Vert \vec{f}(\mathbf{u})\right\Vert \left\Vert \vec{f}(\mathbf{v}%
)\right\Vert =\mu^{2}\left\Vert \mathbf{u}\right\Vert \left\Vert
\mathbf{v}\right\Vert >\mu^{2}\left\vert \mathbf{u\cdot v}\right\vert
=\left\vert \vec{f}(\mathbf{u})\cdot\vec{f}(\mathbf{v})\right\vert .
\]
Therefore, it can be said from Theorem $\ref{min}$ that we have $\theta
=\gamma$ similar to $\left(  \ref{011}\right)  .$

\qquad It can also be found that $\theta$ is equal to $\gamma$ in case of
condition $\left(  iii\right)  $ in the Theorem $\ref{min}.$ As a consequence,
Every p-similarity transformation preserves the angle between any two vectors.
\end{proof}

\section{Geometric Invariants of non-lightlike Curves in the Lorentzian
Similarity Geometry}

\qquad Let $\alpha:t\in I\rightarrow\alpha\left(  t\right)  \in\mathbb{E}%
_{1}^{3}$ be a non-lightlike curve of class $C^{3}$ and $\kappa_{\alpha}$ and
$\tau_{\alpha}$ show curvature and torsion of $\alpha$, respectively. We
denote image of $\alpha$ under $f\in$ \textbf{Sim}$\left(  \mathbb{E}_{1}%
^{3}\right)  $ by $\beta.$ Then $\beta$ can be stated as
\begin{equation}
\beta\left(  t\right)  =\mu q\alpha\left(  t\right)  q^{-1}+\mathbf{b}%
\in\text{Im}\mathbb{\hat{H}},\text{ \ \ \ \ \ \ }t\in I.\label{1}%
\end{equation}

\qquad The arc length functions of $\alpha$ and $\beta$ starting at $t_{0}\in
I$ are%
\begin{equation}
s(t)=\int\limits_{t_{0}}^{t}\left\Vert \frac{d\alpha\left(  u\right)  }%
{du}\right\Vert du,\text{ \ \ \ \ \ \ }s^{\ast}\left(  t\right)
=\int\limits_{t_{0}}^{t}\left\Vert \frac{d\beta\left(  u\right)  }%
{du}\right\Vert du=\left\vert \mu\right\vert s\left(  t\right)  .\label{1'}%
\end{equation}
The Frenet-Serret formulas of $\alpha$ in the Minkowski 3-space is
\begin{equation}
\frac{d}{ds}%
\begin{bmatrix}
\mathbf{e}_{1}\\
\mathbf{e}_{2}\\
\mathbf{e}_{3}%
\end{bmatrix}
=%
\begin{bmatrix}
0 & \kappa_{\alpha} & 0\\
\varepsilon_{\mathbf{e}_{3}}\kappa_{\alpha} & 0 & \tau_{\alpha}\\
0 & \varepsilon_{\mathbf{e}_{1}}\tau_{\alpha} & 0
\end{bmatrix}%
\begin{bmatrix}
\mathbf{e}_{1}\\
\mathbf{e}_{2}\\
\mathbf{e}_{3}%
\end{bmatrix}
\label{0}%
\end{equation}
where $\left\{  \mathbf{e}_{1},\mathbf{e}_{2},\mathbf{e}_{3}\right\}  $ is
Frenet frame of $\alpha$ and $\varepsilon_{\mathbf{e}_{\ell}}=\mathbf{e}%
_{\ell}\cdot\mathbf{e}_{\ell}\ $for $1\leq\ell\leq3.$ (see \cite{biharmonic}
and \cite{minkowski}). In this section, the differentiation according to $s$
is denoted by primes. The curvature $\kappa_{\alpha}$ and torsion
$\tau_{\alpha}$ of non-lightlike curve $\alpha$ is given by
\begin{equation}
\kappa_{\alpha}\left(  s\right)  =\left\Vert \alpha^{\prime}\times
\alpha^{\prime\prime}\right\Vert ,\text{ \ \ \ \ \ \ }\tau_{\alpha}\left(
s\right)  =\frac{\det\left(  \alpha^{\prime},\alpha^{\prime\prime}%
,\alpha^{\prime\prime\prime}\right)  }{\left\Vert \alpha^{\prime}\times
\alpha^{\prime\prime}\right\Vert ^{2}}.\label{2}%
\end{equation}
From $\left(  \ref{1}\right)  ,$ $\left(  \ref{1'}\right)  $ and $\left(
\ref{2}\right)  ,$ we can calculate the curvature $\kappa_{\beta}\left(
\left\vert \mu\right\vert s\right)  $ and the torsion $\tau_{\beta}\left(
\left\vert \mu\right\vert s\right)  $ as
\begin{equation}
\kappa_{\beta}=\left\Vert \beta^{\prime}\times\beta^{\prime\prime}\right\Vert
=\frac{1}{\left\vert \mu\right\vert }\kappa_{\alpha}\left(  s\right)
\label{02}%
\end{equation}
and
\begin{equation}
\tau_{\beta}=\dfrac{1}{\mu}\tau_{\alpha}\left(  s\right)  .\label{03}%
\end{equation}
Since we have $ds^{\ast}=\left\vert \mu\right\vert ds$ from $\left(
\ref{1'}\right)  ,$ we get $\kappa_{\alpha}ds=\kappa_{\beta}ds^{\ast}$ and
$\left\vert \tau_{\alpha}\right\vert ds=\left\vert \tau_{\beta}\right\vert
ds^{\ast}.$

\qquad Let $\sigma_{\alpha}$ and $\sigma_{\beta}$ be spherical arc-length
parameters of $\alpha$ and $\beta,$ respectively. Then, we can find that
\begin{equation}
d\sigma_{\alpha}=\kappa_{\alpha}ds=\kappa_{\beta}ds^{\ast}=d\sigma_{\beta
}.\label{3}%
\end{equation}
Thus, the spherical arc-length element $d\sigma_{\alpha}$ is invariant under
the group of the p-similarities of $\mathbb{E}_{1}^{3}.$ The derivative
formulas of $\alpha$ with respect to $\sigma_{\alpha}$ are given by
\begin{equation}
\frac{d\alpha}{d\sigma_{\alpha}}=\frac{1}{\kappa_{\alpha}}\mathbf{e}%
_{1},\text{ \ \ \ \ \ \ \ }\frac{d^{2}\alpha}{d\sigma_{\alpha}^{2}}%
=-\frac{d\kappa_{\alpha}}{\kappa_{\alpha}d\sigma_{\alpha}}\frac{d\alpha
}{d\sigma_{\alpha}}+\frac{1}{\kappa_{\alpha}}\mathbf{e}_{2}\label{4}%
\end{equation}
and%
\begin{equation}
\frac{d}{d\sigma_{\alpha}}%
\begin{bmatrix}
\mathbf{e}_{1}\\
\mathbf{e}_{2}\\
\mathbf{e}_{3}%
\end{bmatrix}
=%
\begin{bmatrix}
0 & 1 & 0\\
\varepsilon_{3} & 0 & \frac{\tau_{\alpha}}{\kappa_{\alpha}}\\
0 & \varepsilon_{1}\frac{\tau_{\alpha}}{\kappa_{\alpha}} & 0
\end{bmatrix}%
\begin{bmatrix}
\mathbf{e}_{1}\\
\mathbf{e}_{2}\\
\mathbf{e}_{3}%
\end{bmatrix}
\label{6}%
\end{equation}
by means of $\left(  \ref{0}\right)  $ and $\left(  \ref{3}\right)  .$
Similarly, for the non-lightlike curve $\beta$ we also have
\begin{equation}
\frac{d^{2}\beta}{d\sigma_{\beta}^{2}}=-\frac{d\kappa_{\beta}}{\kappa_{\beta
}d\sigma_{\beta}}\frac{d\beta}{d\sigma_{\beta}}+\frac{1}{\kappa_{\beta}%
}\mathbf{e}_{2}^{\ast}\label{7}%
\end{equation}
where $\left\{  \mathbf{e}_{1}^{\ast},\mathbf{e}_{2}^{\ast},\mathbf{e}%
_{3}^{\ast}\right\}  $ is a Frenet frame field along the non-lightlike curve
$\beta.$ From $\left(  \ref{02}\right)  ,$ $\left(  \ref{03}\right)  $ and
$\left(  \ref{3}\right)  ,$ we can write%
\[
-\frac{d\kappa_{\beta}}{\kappa_{\beta}d\sigma_{\beta}}=-\frac{d\kappa_{\alpha
}}{\kappa_{\alpha}d\sigma_{\alpha}}\text{ \ \ and \ \ }\frac{\tau_{\beta}%
}{\kappa_{\beta}}=\frac{\left\vert \mu\right\vert }{\mu}\frac{\tau_{\alpha}%
}{\kappa_{\alpha}}.
\]
If we take $\mu>0,$ i.e. the p-similarity is an orientation-preserving
transformation, we get $\dfrac{\tau_{\beta}}{\kappa_{\beta}}=\dfrac
{\tau_{\alpha}}{\kappa_{\alpha}}.$ Thus, we obtain the following Lemma from
above calculations.

\begin{lemma}
The functions $\tilde{\kappa}_{\alpha}=-\dfrac{d\kappa_{\alpha}}%
{\kappa_{\alpha}d\sigma_{\alpha}}$ and $\tilde{\tau}_{\alpha}=\dfrac
{\tau_{\alpha}}{\kappa_{\alpha}}$ are invariants under the group of the
orientation-preserving p-similarities of the Minkowski 3-space.
\end{lemma}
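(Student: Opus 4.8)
The plan is to take an orientation-preserving p-similarity $f$ with ratio $\left\vert\mu\right\vert$ and $\mu>0$, set $\beta=f\circ\alpha$ as in $\left(\ref{1}\right)$, and verify directly that $\tilde{\kappa}_{\beta}=\tilde{\kappa}_{\alpha}$ and $\tilde{\tau}_{\beta}=\tilde{\tau}_{\alpha}$. All the ingredients needed are already assembled above: the curvature scaling law $\left(\ref{02}\right)$, the torsion scaling law $\left(\ref{03}\right)$, and the invariance of the spherical arc-length element $\left(\ref{3}\right)$. So the proof amounts to substituting these three relations into the definitions of $\tilde{\kappa}_{\alpha}$ and $\tilde{\tau}_{\alpha}$ and watching the scaling factors cancel.

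For the first invariant I would observe that $\left(\ref{02}\right)$ writes $\kappa_{\beta}$ as the constant multiple $\frac{1}{\left\vert\mu\right\vert}\kappa_{\alpha}$ of $\kappa_{\alpha}$. Hence $d\kappa_{\beta}=\frac{1}{\left\vert\mu\right\vert}d\kappa_{\alpha}$, and in the logarithmic-type expression $-\frac{d\kappa_{\beta}}{\kappa_{\beta}d\sigma_{\beta}}$ the factor $\frac{1}{\left\vert\mu\right\vert}$ appears in both numerator and denominator and cancels. Since $\left(\ref{3}\right)$ gives $d\sigma_{\beta}=d\sigma_{\alpha}$, this leaves $\tilde{\kappa}_{\beta}=\tilde{\kappa}_{\alpha}$. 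Notice this step uses only $\left\vert\mu\right\vert$, so $\tilde{\kappa}_{\alpha}$ is in fact invariant under every p-similarity, orientation-preserving or not.

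For the second invariant I would form the ratio $\tau_{\beta}/\kappa_{\beta}$ using $\left(\ref{02}\right)$ and $\left(\ref{03}\right)$, which yields $\frac{\tau_{\beta}}{\kappa_{\beta}}=\frac{\left\vert\mu\right\vert}{\mu}\frac{\tau_{\alpha}}{\kappa_{\alpha}}$. The single delicate point of the whole argument lives in this factor $\left\vert\mu\right\vert/\mu$: the torsion scales by $1/\mu$ (carrying a sign) while the curvature scales by $1/\left\vert\mu\right\vert$ (always positive), because the Lorentzian vector product and the signed determinant in $\left(\ref{2}\right)$ respond to orientation differently than the norm does. Restricting to orientation-preserving maps forces $\mu>0$, so $\left\vert\mu\right\vert/\mu=1$ and $\tilde{\tau}_{\beta}=\tilde{\tau}_{\alpha}$. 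This sign bookkeeping is the only obstacle; for $\mu<0$ the ratio would flip to $-\tilde{\tau}_{\alpha}$, which is exactly why the orientation-preserving hypothesis cannot be dropped for $\tilde{\tau}_{\alpha}$.
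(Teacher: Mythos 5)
Your proposal is correct and follows essentially the same route as the paper: both substitute the scaling laws $\kappa_{\beta}=\tfrac{1}{\left\vert\mu\right\vert}\kappa_{\alpha}$, $\tau_{\beta}=\tfrac{1}{\mu}\tau_{\alpha}$ and the invariance $d\sigma_{\beta}=d\sigma_{\alpha}$ into the definitions, arriving at the same factor $\left\vert\mu\right\vert/\mu$ that forces the orientation-preserving hypothesis for $\tilde{\tau}_{\alpha}$. Your added remark that $\tilde{\kappa}_{\alpha}$ is invariant under all p-similarities, not just orientation-preserving ones, is a correct observation that the paper leaves implicit.
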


\qquad Using $\left(  \ref{4}\right)  $ and $\left(  \ref{6}\right)  $ the
invariants $\tilde{\kappa}_{\alpha}$ and $\tilde{\tau}_{\alpha}$ can take the
form
\begin{equation}
\tilde{\kappa}_{\alpha}\left(  \sigma_{\alpha}\right)  =\dfrac{\frac
{d^{2}\alpha}{d\sigma_{\alpha}^{2}}\cdot\frac{d\alpha}{d\sigma_{\alpha}}%
}{\frac{d\alpha}{d\sigma_{\alpha}}\cdot\frac{d\alpha}{d\sigma_{\alpha}}%
},\label{8}%
\end{equation}%
\begin{equation}
\tilde{\tau}_{\alpha}\left(  \sigma_{\alpha}\right)  =\det\left(
\frac{d\alpha}{d\sigma_{\alpha}},\frac{d^{2}\alpha}{d\sigma_{\alpha}^{2}%
},\frac{d^{3}\alpha}{d\sigma_{\alpha}^{3}}\right)  \frac{\left\Vert
\frac{d\alpha}{d\sigma_{\alpha}}\right\Vert ^{3}}{\left\Vert \frac{d\alpha
}{d\sigma_{\alpha}}\times\frac{d^{2}\alpha}{d\sigma_{\alpha}^{2}}\right\Vert
^{3}}.\label{9}%
\end{equation}

\begin{definition}
Let $\alpha:I\rightarrow\mathbb{E}_{1}^{3}$ be a non-lightlike Frenet curve of
the class $C^{3}$ parameterized by the spherical arc length parameter
$\sigma_{\alpha}.$ Let $\kappa_{\alpha}\left(  \sigma_{\alpha}\right)  $ and
$\tau_{\alpha}\left(  \sigma_{\alpha}\right)  $ be the curvature and torsion
of $\alpha,$ respectively. The functions
\begin{equation}
\tilde{\kappa}_{\alpha}=-\dfrac{d\kappa_{\alpha}}{\kappa_{\alpha}%
d\sigma_{\alpha}}\text{ \ \ \ and \ \ \ \ }\tilde{\tau}_{\alpha}=\dfrac
{\tau_{\alpha}}{\kappa_{\alpha}}\label{000}%
\end{equation}
are \emph{p-shape curvature }and \emph{p-shape torsion }of $\alpha.$ The
ordered pair $\left(  \tilde{\kappa}_{\alpha},\tilde{\tau}_{\alpha}\right)  $
is called a (local) \emph{p-shape }of the non-lightlike curve $\alpha$ in the
Minkowski 3-space.
\end{definition}

\qquad We consider the pseudo-orthogonal 3-frame $\left\{  \mathbf{e}%
_{1}\left(  \sigma_{\alpha}\right)  /\kappa_{\alpha},\mathbf{e}_{2}\left(
\sigma_{\alpha}\right)  /\kappa_{\alpha},\mathbf{e}_{3}\left(  \sigma_{\alpha
}\right)  /\kappa_{\alpha}\right\}  ,$ $\sigma_{\alpha}\in I,$ for the curve.
Then, by the equations $\left(  \ref{4}\right)  $ and $\left(  \ref{6}\right)
,$ we get
\begin{equation}
\frac{d}{d\sigma_{\alpha}}%
\begin{bmatrix}
\mathbf{e}_{1}/\kappa_{\alpha}\\
\mathbf{e}_{2}/\kappa_{\alpha}\\
\mathbf{e}_{3}/\kappa_{\alpha}%
\end{bmatrix}
=%
\begin{bmatrix}
\tilde{\kappa}_{\alpha} & 1 & 0\\
\varepsilon_{\mathbf{e}_{3}} & \tilde{\kappa}_{\alpha} & \tilde{\tau}_{\alpha
}\\
0 & \varepsilon_{\mathbf{e}_{1}}\tilde{\tau}_{\alpha} & \tilde{\kappa}%
_{\alpha}%
\end{bmatrix}%
\begin{bmatrix}
\mathbf{e}_{1}/\kappa_{\alpha}\\
\mathbf{e}_{2}/\kappa_{\alpha}\\
\mathbf{e}_{3}/\kappa_{\alpha}%
\end{bmatrix}
.\label{fu}%
\end{equation}
The pseudo-orthogonal frame $\mathbf{e}_{1}\left(  \sigma_{\alpha}\right)
/\kappa_{\alpha},$ $\mathbf{e}_{2}\left(  \sigma_{\alpha}\right)
/\kappa_{\alpha},$ $\mathbf{e}_{3}\left(  \sigma_{\alpha}\right)
/\kappa_{\alpha}$ is invariant under the group \textbf{Sim}$^{+}\left(
\mathbb{E}_{1}^{3}\right)  .$ Thus, it may be said that the equation $\left(
\ref{fu}\right)  $ is the Frenet-Serret frame of $\alpha$ in the Lorentzian
similarity 3-space.

\subsection{The relation between focal curvatures and p-shape of $\alpha$}

\qquad Let $\alpha:I\rightarrow\mathbb{E}_{1}^{3}$ be a unit speed
non-lightlike Frenet curve with the Frenet frame $\mathbf{e}_{1},$
$\mathbf{e}_{2},$ $\mathbf{e}_{3}$ and let $s$ be an arc length parameter of
$\alpha.$ The curve $\mathbf{\gamma:}I\rightarrow\mathbb{E}_{1}^{3}$
consisting of the centers of the osculating sphere of the curve $\alpha$ is
called the \emph{focal curve }of $\alpha.$ The focal curve can be represented
by
\[
\mathbf{\gamma}\left(  s\right)  =\alpha\left(  s\right)  +m_{1}\left(
s\right)  \mathbf{e}_{2}+m_{2}\left(  s\right)  \mathbf{e}_{3}%
\]
where $m_{1}$ and $m_{2}$ are smooth functions called \emph{focal curvature
}of $\alpha.$ Then, we have the following theorem from \cite{minkowski}.

\begin{theorem}
\label{focal}Let $\alpha$ be a non-lightlike curve in $\mathbb{E}_{1}^{3},$
the radius and center of the osculating sphere of $\alpha$ at $\alpha\left(
s\right)  $ are
\[
r=\sqrt{\left(  \varepsilon_{\mathbf{e}_{2}}\right)  \frac{1}{\kappa^{2}%
}+\left(  \varepsilon_{\mathbf{e}_{3}}\right)  \left(  \frac{\kappa^{^{\prime
}}}{\kappa^{2}\tau}\right)  }\text{ \ and \ }\mathbf{\gamma}\left(  s\right)
=\alpha\left(  s\right)  +\frac{\varepsilon_{\mathbf{e}_{1}}\varepsilon
_{\mathbf{e}_{2}}}{\kappa}\mathbf{e}_{2}+\frac{\varepsilon_{\mathbf{e}_{1}%
}\varepsilon_{\mathbf{e}_{3}}}{\tau}\left(  \frac{1}{\kappa}\right)
^{^{\prime}}\mathbf{e}_{3}%
\]
where $\mathbf{e}_{2},$ and $\mathbf{e}_{3}$ are normal and binormal vector
fields of the curve at $\alpha\left(  s\right)  .$
\end{theorem}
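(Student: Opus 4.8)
The plan is to realise the osculating sphere at $\alpha(s)$ as the pseudo-sphere having third-order contact with the curve at that point, and to extract its centre and radius by differentiating the squared-distance function in the Frenet frame. For a fixed candidate centre $\gamma$ set
\[
F(s)=\left(  \alpha(s)-\gamma\right)  \cdot\left(  \alpha(s)-\gamma\right)  .
\]
A point of $\alpha$ lies on a pseudo-sphere of centre $\gamma$ and radius $r$ precisely when $F(s)=\varepsilon\,r^{2}$ for the appropriate causal sign $\varepsilon$, so the osculating-sphere conditions at the point are $F^{\prime}(s)=F^{\prime\prime}(s)=F^{\prime\prime\prime}(s)=0$.

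First I would differentiate $F$ using $\alpha^{\prime}=\mathbf{e}_{1}$ and the Minkowski Frenet--Serret equations $\left(  \ref{0}\right)  $, namely $\mathbf{e}_{1}^{\prime}=\kappa\mathbf{e}_{2}$, $\mathbf{e}_{2}^{\prime}=\varepsilon_{\mathbf{e}_{3}}\kappa\mathbf{e}_{1}+\tau\mathbf{e}_{3}$ and $\mathbf{e}_{3}^{\prime}=\varepsilon_{\mathbf{e}_{1}}\tau\mathbf{e}_{2}$, together with the orthogonality relations $\mathbf{e}_{\ell}\cdot\mathbf{e}_{k}=\varepsilon_{\mathbf{e}_{\ell}}\delta_{\ell k}$. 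The three contact conditions then form a triangular system. From $F^{\prime}=2\left(  \alpha-\gamma\right)  \cdot\mathbf{e}_{1}=0$ the vector $\alpha-\gamma$ is seen to lie in the normal plane, so $\gamma-\alpha$ has no $\mathbf{e}_{1}$-component. Next $F^{\prime\prime}=2\varepsilon_{\mathbf{e}_{1}}+2\kappa\left(  \alpha-\gamma\right)  \cdot\mathbf{e}_{2}=0$ fixes the $\mathbf{e}_{2}$-coefficient of $\gamma-\alpha$ to be $\varepsilon_{\mathbf{e}_{1}}\varepsilon_{\mathbf{e}_{2}}/\kappa$. Finally, after inserting this value, $F^{\prime\prime\prime}=0$ isolates the $\mathbf{e}_{3}$-coefficient; using $\kappa^{\prime}/\kappa^{2}=-\left(  1/\kappa\right)  ^{\prime}$ it equals $\tfrac{\varepsilon_{\mathbf{e}_{1}}\varepsilon_{\mathbf{e}_{3}}}{\tau}\left(  1/\kappa\right)  ^{\prime}$, which yields the stated expression for $\gamma(s)$.

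With the centre determined, the radius follows by evaluating $F$ at the computed coefficients. Since $\alpha-\gamma$ now has only $\mathbf{e}_{2}$- and $\mathbf{e}_{3}$-components and the frame is pseudo-orthonormal, $F$ splits as $\varepsilon_{\mathbf{e}_{2}}$ times the square of the $\mathbf{e}_{2}$-coefficient plus $\varepsilon_{\mathbf{e}_{3}}$ times the square of the $\mathbf{e}_{3}$-coefficient, and substituting the two values produces the expression under the radical defining $r$.

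I expect the only delicate point to be the consistent bookkeeping of the causal signs $\varepsilon_{\mathbf{e}_{\ell}}$ across the three differentiations. In contrast with the Euclidean situation, each use of $\left(  \ref{0}\right)  $ introduces a factor $\varepsilon_{\mathbf{e}_{3}}$ or $\varepsilon_{\mathbf{e}_{1}}$, and the self-products $\mathbf{e}_{\ell}\cdot\mathbf{e}_{\ell}=\varepsilon_{\mathbf{e}_{\ell}}$ contribute further signs; tracking these correctly is precisely what produces the causal-character factors in the final formulas, the remainder being the standard third-order contact computation.
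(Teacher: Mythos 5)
Your argument cannot be matched against an internal proof, because the paper never proves Theorem \ref{focal}: it imports the statement from the reference \cite{minkowski} and only uses it to read off the focal curvatures $m_{1}$ and $m_{2}$. So your third-order-contact derivation is supplying a proof the paper omits, and it is the standard (essentially the only) route. Your computations check out: with $F(s)=(\alpha-\gamma)\cdot(\alpha-\gamma)$ and the Frenet equations $\left(\ref{0}\right)$ one gets $F^{\prime}=2(\alpha-\gamma)\cdot\mathbf{e}_{1}$, then $F^{\prime\prime}=2\varepsilon_{\mathbf{e}_{1}}+2\kappa(\alpha-\gamma)\cdot\mathbf{e}_{2}$, and, after using $F^{\prime}=0$ to kill the $\mathbf{e}_{1}$-term coming from $\mathbf{e}_{2}^{\prime}$, $F^{\prime\prime\prime}=2\kappa^{\prime}(\alpha-\gamma)\cdot\mathbf{e}_{2}+2\kappa\tau(\alpha-\gamma)\cdot\mathbf{e}_{3}$. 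This triangular system gives exactly the coefficients you state, $\varepsilon_{\mathbf{e}_{1}}\varepsilon_{\mathbf{e}_{2}}/\kappa$ for $\mathbf{e}_{2}$ and $(\varepsilon_{\mathbf{e}_{1}}\varepsilon_{\mathbf{e}_{3}}/\tau)\left(1/\kappa\right)^{\prime}$ for $\mathbf{e}_{3}$, so the centre formula is fully justified.

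One caveat: your final step, as written, does not ``produce the expression under the radical'' that the theorem prints. Evaluating $F$ at the computed coefficients gives
\[
(\alpha-\gamma)\cdot(\alpha-\gamma)=\varepsilon_{\mathbf{e}_{2}}\frac{1}{\kappa^{2}}+\varepsilon_{\mathbf{e}_{3}}\left(\frac{\kappa^{\prime}}{\kappa^{2}\tau}\right)^{2},
\]
with the second term squared, whereas the printed statement has $\varepsilon_{\mathbf{e}_{3}}\left(\kappa^{\prime}/(\kappa^{2}\tau)\right)$ to the first power (compare the Euclidean formula $r^{2}=1/\kappa^{2}+\left(\left(1/\kappa\right)^{\prime}/\tau\right)^{2}$). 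The displayed radicand is a typo carried over from the cited source; your derivation yields the correct expression, but you should state explicitly that it corrects the printed formula rather than claim agreement with it. Two smaller points: strictly $r=\sqrt{\left\vert F\right\vert}$, since in $\mathbb{E}_{1}^{3}$ the quantity $F$ can be negative when $\alpha-\gamma$ is timelike; and the whole computation presumes $\alpha$ is a Frenet curve with $\kappa\neq0$, $\tau\neq0$ and a pseudo-orthonormal (nowhere lightlike) frame, which is the setting in which the paper uses the result.
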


\qquad Using the Theorem $\ref{focal}$ we state that the focal curvatures
$m_{1}$ and $m_{2}$ of the non-lightlike curve $\alpha$ are equal to
\begin{equation}
\dfrac{\varepsilon_{\mathbf{e}_{1}}\varepsilon_{\mathbf{e}_{2}}}%
{\kappa_{\alpha}}\text{ \ \ \ \ and \ \ \ \ }\dfrac{1}{\tau_{\alpha}}\left(
\dfrac{\varepsilon_{\mathbf{e}_{1}}\varepsilon_{\mathbf{e}_{3}}}%
{\kappa_{\alpha}}\right)  ^{^{\prime}}\label{001}%
\end{equation}
respectively. Now, we can show the relation between the focal curvatures and
the p-shape curvature and torsion.

\begin{proposition}
Let $\alpha:I\rightarrow\mathbb{E}_{1}^{3}$ be a unit speed non-lightlike
Frenet curve with the non-zero curvature $\kappa$ and torsion $\tau$. Then,
\[
\tilde{\kappa}_{\alpha}=\varepsilon_{\mathbf{e}_{1}}\varepsilon_{\mathbf{e}%
_{2}}m_{1}^{^{\prime}}\text{ \ \ \ \ and \ \ \ \ }\tilde{\tau}_{\alpha
}=\varepsilon_{\mathbf{e}_{1}}\varepsilon_{\mathbf{e}_{3}}\frac{m_{1}%
^{^{\prime}}m_{1}}{f_{2}}.
\]

\end{proposition}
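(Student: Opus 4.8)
The plan is to reduce both identities to a single elementary computation, since the key relation $d\sigma_{\alpha}=\kappa_{\alpha}\,ds$ from $\left(\ref{3}\right)$ gives the change-of-variable rule $\frac{d}{d\sigma_{\alpha}}=\frac{1}{\kappa_{\alpha}}\frac{d}{ds}$, where primes denote $\frac{d}{ds}$. Everything else follows by differentiating the explicit focal curvatures in $\left(\ref{001}\right)$ and using that each $\varepsilon_{\mathbf{e}_{\ell}}=\pm1$ is a constant with $\varepsilon_{\mathbf{e}_{\ell}}^{2}=1$. I would also record at the outset that the symbol $f_{2}$ appearing in the statement is the second focal curvature, i.e. $f_{2}=m_{2}=\frac{1}{\tau_{\alpha}}\left(\frac{\varepsilon_{\mathbf{e}_{1}}\varepsilon_{\mathbf{e}_{3}}}{\kappa_{\alpha}}\right)^{\prime}$, so that the right-hand sides are fully determined by $\kappa_{\alpha}$ and $\tau_{\alpha}$.

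For the first identity I would compute $\tilde{\kappa}_{\alpha}$ straight from its definition $\left(\ref{000}\right)$: applying the chain rule gives
\[
\tilde{\kappa}_{\alpha}=-\frac{1}{\kappa_{\alpha}}\frac{d\kappa_{\alpha}}{d\sigma_{\alpha}}=-\frac{\kappa_{\alpha}^{\prime}}{\kappa_{\alpha}^{2}}.
\]
On the other side, differentiating $m_{1}=\frac{\varepsilon_{\mathbf{e}_{1}}\varepsilon_{\mathbf{e}_{2}}}{\kappa_{\alpha}}$ and pulling the constant sign out yields $m_{1}^{\prime}=-\varepsilon_{\mathbf{e}_{1}}\varepsilon_{\mathbf{e}_{2}}\frac{\kappa_{\alpha}^{\prime}}{\kappa_{\alpha}^{2}}$, so multiplying by $\varepsilon_{\mathbf{e}_{1}}\varepsilon_{\mathbf{e}_{2}}$ and using $\varepsilon_{\mathbf{e}_{1}}^{2}\varepsilon_{\mathbf{e}_{2}}^{2}=1$ gives exactly $-\frac{\kappa_{\alpha}^{\prime}}{\kappa_{\alpha}^{2}}$. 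Comparing the two expressions establishes $\tilde{\kappa}_{\alpha}=\varepsilon_{\mathbf{e}_{1}}\varepsilon_{\mathbf{e}_{2}}m_{1}^{\prime}$.

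For the second identity I would assemble the two pieces already computed. From $m_{1}^{\prime}=-\varepsilon_{\mathbf{e}_{1}}\varepsilon_{\mathbf{e}_{2}}\frac{\kappa_{\alpha}^{\prime}}{\kappa_{\alpha}^{2}}$ and $m_{1}=\frac{\varepsilon_{\mathbf{e}_{1}}\varepsilon_{\mathbf{e}_{2}}}{\kappa_{\alpha}}$ I get $m_{1}^{\prime}m_{1}=-\frac{\kappa_{\alpha}^{\prime}}{\kappa_{\alpha}^{3}}$, while differentiating as above gives $f_{2}=m_{2}=-\varepsilon_{\mathbf{e}_{1}}\varepsilon_{\mathbf{e}_{3}}\frac{\kappa_{\alpha}^{\prime}}{\kappa_{\alpha}^{2}\tau_{\alpha}}$. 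Forming the quotient, the factor $\kappa_{\alpha}^{\prime}$ cancels and the powers of $\kappa_{\alpha}$ combine to leave $\varepsilon_{\mathbf{e}_{1}}\varepsilon_{\mathbf{e}_{3}}\frac{m_{1}^{\prime}m_{1}}{f_{2}}=\frac{\tau_{\alpha}}{\kappa_{\alpha}}=\tilde{\tau}_{\alpha}$, which is the claim.

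I do not expect a genuine obstacle here: the argument is a direct verification once the $\sigma_{\alpha}$-to-$s$ chain rule and the constancy of the $\varepsilon_{\mathbf{e}_{\ell}}$ are in place. The only points requiring care are bookkeeping of the causal signs $\varepsilon_{\mathbf{e}_{\ell}}$ — which all cancel in squares — and fixing the meaning of $f_{2}$; identifying $f_{2}$ with the second focal curvature $m_{2}$ from $\left(\ref{001}\right)$ is precisely what makes the stated formula come out, so I would state that identification explicitly before running the computation.
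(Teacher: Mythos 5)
Your proof is correct and follows essentially the same route as the paper's: convert the $\sigma_{\alpha}$-derivative to an $s$-derivative via $d\sigma_{\alpha}=\kappa_{\alpha}\,ds$, differentiate the focal curvatures from $\left(\ref{001}\right)$, and let the signs $\varepsilon_{\mathbf{e}_{\ell}}$ cancel in squares. Your explicit identification of $f_{2}$ with the second focal curvature $m_{2}$ is also exactly what the paper does (its own proof writes $m_{2}$, so $f_{2}$ in the statement is a typo), and both arguments share the same implicit assumption that $m_{2}\neq0$, i.e. $\kappa_{\alpha}^{\prime}\neq0$, so the quotient makes sense.
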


\begin{proof}
From $\left(  \ref{000}\right)  $ and $\left(  \ref{001}\right)  $ we can
write
\[
\tilde{\kappa}_{\alpha}=-\dfrac{d\kappa_{\alpha}}{\kappa_{\alpha}%
d\sigma_{\alpha}}=-\frac{1}{\kappa_{\alpha}^{2}}\dfrac{d\kappa_{\alpha}}%
{ds}=\left(  \frac{1}{\kappa_{\alpha}}\right)  ^{^{\prime}}=\varepsilon
_{\mathbf{e}_{1}}\varepsilon_{\mathbf{e}_{2}}m_{1}^{^{\prime}}%
\]
and
\[
\tilde{\tau}_{\alpha}=\dfrac{\tau_{\alpha}}{\kappa_{\alpha}}=\frac{1}%
{\kappa_{\alpha}}\tau_{\alpha}=\varepsilon_{\mathbf{e}_{1}}\varepsilon
_{\mathbf{e}_{2}}m_{1}\frac{\varepsilon_{\mathbf{e}_{1}}\varepsilon
_{\mathbf{e}_{3}}}{m_{2}}\left(  \varepsilon_{\mathbf{e}_{1}}\varepsilon
_{\mathbf{e}_{2}}m_{1}\right)  ^{^{\prime}}=\varepsilon_{\mathbf{e}_{1}%
}\varepsilon_{\mathbf{e}_{3}}\frac{m_{1}^{^{\prime}}m_{1}}{m_{2}}.
\]

\end{proof}

\section{Uniqueness Theorem}

\qquad Two non-lightlike Frenet curves which have the same torsion and the
same positive curvature are always equivalent according to Lorentzian motion.
This notion can be extended under the group \textbf{Sim}$\left(
\mathbb{E}_{1}^{3}\right)  $ for the non-lightlike Frenet curves which have
the same p-shape torsion and p-shape curvature, in the Minkowski 3-space
$\mathbb{E}_{1}^{3}.$

\begin{theorem}
\label{teklik}(Uniqueness Theorem) Let $\alpha,\alpha^{\ast}:I\rightarrow
\mathbb{E}_{1}^{3}$ be two non-lightlike Frenet curves of class $C^{3}$
parameterized by the same spherical arc length parameter $\sigma$ and have the
same causal characters, where $I\subset%
\mathbb{R}
$ is an open interval. Suppose that $\alpha$ and $\alpha^{\ast}$ have the same
p-shape curvatures $\tilde{\kappa}=\tilde{\kappa}^{\ast}$ and the same p-shape
torsions $\tilde{\tau}=\tilde{\tau}^{\ast}$ for any $\sigma\in I.$

$\mathbf{i})$ If $\alpha,\alpha^{\ast}$ are timelike curves, there exists a
$f\in$\textbf{Sim}$^{+}\left(  \mathbb{E}_{1}^{3}\right)  $ such that
$\alpha^{\ast}=f\circ\alpha.$

$\mathbf{ii})$ If $\alpha,\alpha^{\ast}$ are spacelike curves, there exists a
$f\in$\textbf{Sim}$^{-}\left(  \mathbb{E}_{1}^{3}\right)  $ such that
$\alpha^{\ast}=f\circ\alpha.$
\end{theorem}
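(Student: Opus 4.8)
The plan is to reduce the problem to the classical fundamental theorem for non-lightlike curves under the Lorentzian motion group, by first using a p-similarity to normalize the two curves so that they share not only their p-shapes but their actual curvatures and torsions. The strategy proceeds in three stages: normalize the scale, invoke the classical rigidity result, and verify that the composite transformation lies in the correct orientation subgroup.

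First I would use the hypothesis $\tilde{\kappa}=\tilde{\kappa}^{\ast}$. By the definition $\left(\ref{000}\right)$, $\tilde{\kappa}_{\alpha}=-\frac{d\kappa_{\alpha}}{\kappa_{\alpha}d\sigma}=-\frac{d}{d\sigma}\left(\ln\kappa_{\alpha}\right)$, so equality of the p-shape curvatures gives $\frac{d}{d\sigma}\ln\left(\kappa_{\alpha}/\kappa_{\alpha^{\ast}}\right)=0$, whence $\kappa_{\alpha}=c\,\kappa_{\alpha^{\ast}}$ for some constant $c>0$. The next step is to choose the p-similarity ratio to absorb this constant: by $\left(\ref{02}\right)$ a p-similarity of ratio $\left\vert\mu\right\vert$ scales curvature by $1/\left\vert\mu\right\vert$, so setting $\left\vert\mu\right\vert=1/c$ and applying such a transformation $f_{0}$ to $\alpha$ produces a curve $\bar\alpha=f_{0}\circ\alpha$ with $\kappa_{\bar\alpha}=\kappa_{\alpha^{\ast}}$. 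Since the spherical arc-length is p-similarity invariant by $\left(\ref{3}\right)$ and the two curves are parameterized by the same $\sigma$, this equality of curvatures holds as functions of $\sigma$, and from $\tilde{\tau}=\tilde{\tau}^{\ast}$ together with $\tilde{\tau}_{\alpha}=\tau_{\alpha}/\kappa_{\alpha}$ one then gets $\tau_{\bar\alpha}=\tau_{\alpha^{\ast}}$ as well (here one uses that $f_{0}$ is chosen orientation-preserving so that by $\left(\ref{03}\right)$ the torsion scales by $1/\mu=1/\left\vert\mu\right\vert$ with the same sign).

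At this point $\bar\alpha$ and $\alpha^{\ast}$ are two non-lightlike Frenet curves of the same causal type with identical curvature and torsion as functions of arc length, so the classical fundamental theorem of curves in $\mathbb{E}_{1}^{3}$ under the Lorentzian motion group yields a pseudo-orthogonal transformation $g$ (a rotation $\mathbf{R}_{q}$ together with a translation $\mathbf{b}$) with $\alpha^{\ast}=g\circ\bar\alpha=g\circ f_{0}\circ\alpha$. Setting $f=g\circ f_{0}$ gives the desired p-similarity, since the composition of p-similarities is again a p-similarity by the group structure of $\mathbf{Sim}\left(\mathbb{E}_{1}^{3}\right)$. The final task is to identify the orientation class of $f$, which is where the two cases split: for timelike curves the Frenet frames are positively oriented and the connecting motion must preserve orientation, placing $f\in\mathbf{Sim}^{+}$, while for spacelike curves the sign conventions $\varepsilon_{\mathbf{e}_{1}},\varepsilon_{\mathbf{e}_{2}},\varepsilon_{\mathbf{e}_{3}}$ in $\left(\ref{0}\right)$ force the matching frame to be related by an orientation-reversing map, giving $f\in\mathbf{Sim}^{-}$.

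The main obstacle I anticipate is the careful bookkeeping of the causal-character signs $\varepsilon_{\mathbf{e}_{\ell}}$ in the last step: one must verify that the p-shape data genuinely determine the full Frenet apparatus up to the claimed orientation, and in particular that the recovery of $\tau_{\bar\alpha}=\tau_{\alpha^{\ast}}$ (not merely $\left\vert\tau\right\vert$) is consistent with the chosen sign of $\mu$. A clean way to handle this is to integrate the similarity Frenet system $\left(\ref{fu}\right)$ directly: since $\bar\alpha$ and $\alpha^{\ast}$ satisfy the same linear ODE $\left(\ref{fu}\right)$ with identical coefficients $\tilde{\kappa},\tilde{\tau}$, uniqueness of solutions to linear systems pins down the scaled frames up to a single initial pseudo-orthogonal transformation, and the causal type of the curve dictates whether that transformation sits in the proper or improper component. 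This ODE viewpoint also makes the orientation claim in $\mathbf{i})$ versus $\mathbf{ii})$ transparent rather than requiring a separate ad hoc sign computation.
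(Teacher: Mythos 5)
Your overall strategy --- integrate $\tilde{\kappa}=\tilde{\kappa}^{\ast}$ to get $\kappa_{\alpha}=c\,\kappa_{\alpha^{\ast}}$ with $c>0$ constant, absorb the constant by an orientation-preserving homothety $f_{0}$, and then invoke classical rigidity of non-lightlike curves under Lorentzian motions --- is a legitimate alternative to the paper's route, which instead re-proves rigidity at the similarity level by picking a motion $\varphi$ that matches the Frenet frames at $\sigma_{0}$ and showing that two auxiliary functions $\Psi$ and $\Phi$ vanish identically. (One small slip: since a p-similarity of ratio $\left\vert \mu\right\vert $ divides curvature by $\left\vert \mu\right\vert $ by $\left(\ref{02}\right)$, you need $\left\vert \mu\right\vert =c$, not $1/c$, to turn $\kappa_{\alpha}=c\,\kappa_{\alpha^{\ast}}$ into $\kappa_{\bar{\alpha}}=\kappa_{\alpha^{\ast}}$.) The genuine gap sits exactly where the theorem has its content: the dichotomy $f\in$\textbf{Sim}$^{+}\left(  \mathbb{E}_{1}^{3}\right)  $ for timelike curves versus $f\in$\textbf{Sim}$^{-}\left(  \mathbb{E}_{1}^{3}\right)  $ for spacelike curves. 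You assert that for spacelike curves the sign conventions ``force the matching frame to be related by an orientation-reversing map,'' but no argument is given, and nothing in your construction can produce such a dichotomy: $f_{0}$ is orientation-preserving in both cases, and the classical theorem's motion $g$ is obtained by matching Frenet frame to Frenet frame directly. Whenever both frames are positively oriented --- which is the convention under which $\left(\ref{2}\right)$ is consistent with $\left(\ref{0}\right)$, for either causal type --- that frame-matching isometry has determinant $+1$, so your $f=g\circ f_{0}$ comes out orientation-preserving for spacelike curves as well; carried out honestly, your reduction cannot deliver claim $\mathbf{ii})$.

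There is a second, related gap: membership in the paper's group at all. \textbf{Sim}$\left(  \mathbb{E}_{1}^{3}\right)  $ as defined by $\left(\ref{05}\right)$ contains only maps whose linear part is $\mu\mathbf{R}_{q}$ with $\mathbf{R}_{q}$ a split-quaternion rotation (proper and orthochronous), while the Lorentzian isometry group has four components; the classical fundamental theorem hands you a motion in whichever component is dictated by the relative orientation and time-orientation of the two frames, so ``the composition of p-similarities is again a p-similarity'' does not apply to $g\circ f_{0}$ until you have checked that the linear part of $g$ is $\pm\mathbf{R}_{q}$. This check is precisely the sign bookkeeping you deferred, and it is where the hypothesis on causal characters must actually be used. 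By contrast, the paper manufactures the timelike/spacelike dichotomy by hand: it matches frames with causal-character-dependent signs, $\varphi\left(  \mathbf{e}_{i}\left(  \sigma_{0}\right)  \right)  =-\varepsilon_{\mathbf{e}_{i}}\mathbf{e}_{i}^{\ast}\left(  \sigma_{0}\right)  $, and its final similarity is $f=\vartheta\circ\left(  -\varepsilon_{\mathbf{e}_{1}}g\right)  $, so the prefactor $-\varepsilon_{\mathbf{e}_{1}}$ equals $+1$ for timelike and $-1$ for spacelike curves; a correct proof of the statement as written needs an explicit mechanism of this kind. Your closing suggestion to argue via uniqueness of solutions of the linear system $\left(\ref{fu}\right)$ is a good instinct --- in Lorentzian signature it is in fact more robust than norm-function arguments, since $\left\Vert w\right\Vert =0$ does not force $w=0$ --- but ODE uniqueness only propagates a frame identification already fixed at $\sigma_{0}$; it cannot decide in which component of the similarity group the connecting map lies, so the orientation claims in $\mathbf{i})$ and $\mathbf{ii})$ remain unproved.
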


\begin{proof}
Let $\kappa,\kappa^{\ast}$ and $\tau,$ $\tau^{\ast}$ be the curvatures and the
torsions of the $\alpha,$ $\alpha^{\ast}.$ Since $\alpha$ and $\alpha^{\ast}$
have the same shape curvatures $\tilde{\kappa}=\tilde{\kappa}^{\ast},$ we
have
\[
\frac{d\kappa}{\kappa}=\frac{d\kappa^{\ast}}{\kappa^{\ast}}\text{ \ \ or
\ \ }\log\kappa=\log\kappa^{\ast}+\log\mu
\]
where $\mu\in%
\mathbb{R}
^{+}.$ Then, we find $\kappa=\mu\kappa^{\ast}$ for any $\sigma\in I.$ Using
$\tilde{\tau}=\tilde{\tau}^{\ast}$ we get $\tau=\mu\tau^{\ast}$ for any
$\sigma\in I.$ Let $\mathbf{e}_{i}$, $\mathbf{e}_{i}^{\ast},$ $i=1,2,3,$ be a
Frenet frame fields on $\alpha$, $\alpha^{\ast}$ and we choose any point
$\sigma_{0}\in I.$ There exists a Lorentzian motion $\varphi$ of
$\mathbb{E}_{1}^{3}$ such that
\[
\varphi\left(  \alpha\left(  \sigma_{0}\right)  \right)  =\alpha^{\ast}\left(
\sigma_{0}\right)  \text{ \ \ and \ \ }\varphi\left(  \mathbf{e}_{i}\left(
\sigma_{0}\right)  \right)  =-\varepsilon_{\mathbf{e}_{i}}\mathbf{e}_{i}%
^{\ast}\left(  \sigma_{0}\right)  \text{ for }i=1,2,3.
\]
Let's consider the function $\Psi:I\rightarrow%
\mathbb{R}
$ defined by
\[
\Psi\left(  \sigma\right)  =\left\Vert \varphi\left(  \mathbf{e}_{1}\left(
\sigma\right)  \right)  +\varepsilon_{\mathbf{e}_{1}}\mathbf{e}_{1}^{\ast
}\left(  \sigma\right)  \right\Vert ^{2}+\left\Vert \varphi\left(
\mathbf{e}_{2}\left(  \sigma\right)  \right)  +\varepsilon_{\mathbf{e}_{2}%
}\mathbf{e}_{2}^{\ast}\left(  \sigma\right)  \right\Vert ^{2}+\left\Vert
\varphi\left(  \mathbf{e}_{3}\left(  \sigma\right)  \right)  +\varepsilon
_{\mathbf{e}_{3}}\mathbf{e}_{3}^{\ast}\left(  \sigma\right)  \right\Vert ^{2}.
\]
Then
\begin{align*}
\frac{d\Psi}{d\sigma} &  =2\left(  \frac{d}{d\sigma}\varphi\left(
\mathbf{e}_{1}\left(  \sigma\right)  \right)  +\varepsilon_{\mathbf{e}_{1}%
}\frac{d}{d\sigma}\mathbf{e}_{1}^{\ast}\left(  \sigma\right)  \right)
\cdot\left(  \varphi\left(  \mathbf{e}_{1}\left(  \sigma\right)  \right)
+\varepsilon_{\mathbf{e}_{1}}\mathbf{e}_{1}^{\ast}\left(  \sigma\right)
\right)  \\
&  +2\left(  \frac{d}{d\sigma}\varphi\left(  \mathbf{e}_{2}\left(
\sigma\right)  \right)  +\varepsilon_{\mathbf{e}_{2}}\frac{d}{d\sigma
}\mathbf{e}_{2}^{\ast}\left(  \sigma\right)  \right)  \cdot\left(
\varphi\left(  \mathbf{e}_{2}\left(  \sigma\right)  \right)  +\varepsilon
_{\mathbf{e}_{2}}\mathbf{e}_{2}^{\ast}\left(  \sigma\right)  \right)  \\
&  +2\left(  \frac{d}{d\sigma}\varphi\left(  \mathbf{e}_{3}\left(
\sigma\right)  \right)  +\varepsilon_{\mathbf{e}_{3}}\frac{d}{d\sigma
}\mathbf{e}_{3}^{\ast}\left(  \sigma\right)  \right)  \cdot\left(
\varphi\left(  \mathbf{e}_{3}\left(  \sigma\right)  \right)  +\varepsilon
_{\mathbf{e}_{3}}\mathbf{e}_{3}^{\ast}\left(  \sigma\right)  \right)  .
\end{align*}
Using $\left\Vert \varphi\left(  \mathbf{e}_{i}\right)  \right\Vert
^{2}=\left\Vert \mathbf{e}_{i}\right\Vert ^{2}=\left\Vert \mathbf{e}_{i}%
^{\ast}\right\Vert ^{2}=1$ we can write
\begin{align*}
\frac{d\Psi}{d\sigma} &  =2\varepsilon_{\mathbf{e}_{1}}\left[  \left(
\varphi\left(  \frac{d}{d\sigma}\mathbf{e}_{1}\right)  \right)  \cdot
\mathbf{e}_{1}^{\ast}+\varphi\left(  \mathbf{e}_{1}\right)  \cdot\left(
\frac{d}{d\sigma}\mathbf{e}_{1}^{\ast}\right)  \right]  \\
&  +2\varepsilon_{\mathbf{e}_{2}}\left[  \left(  \varphi\left(  \frac
{d}{d\sigma}\mathbf{e}_{2}\right)  \right)  \cdot\mathbf{e}_{2}^{\ast}%
+\varphi\left(  \mathbf{e}_{2}\right)  \cdot\left(  \frac{d}{d\sigma
}\mathbf{e}_{2}^{\ast}\right)  \right]  \\
&  +2\varepsilon_{\mathbf{e}_{3}}\left[  \left(  \varphi\left(  \frac
{d}{d\sigma}\mathbf{e}_{3}\right)  \right)  \cdot\mathbf{e}_{3}^{\ast}%
+\varphi\left(  \mathbf{e}_{3}\right)  \cdot\left(  \frac{d}{d\sigma
}\mathbf{e}_{3}^{\ast}\right)  \right]  .
\end{align*}
From $\left(  \ref{6}\right)  ,$ we get
\begin{align*}
\frac{d\Psi}{d\sigma} &  =\left(  2\varepsilon_{\mathbf{e}_{1}}+2\varepsilon
_{\mathbf{e}_{2}}\varepsilon_{\mathbf{e}_{3}^{\ast}}\right)  \left[
\varphi\left(  \mathbf{e}_{2}\right)  \cdot\mathbf{e}_{1}^{\ast}\right]
+\left(  2\varepsilon_{\mathbf{e}_{1}}+2\varepsilon_{\mathbf{e}_{2}%
}\varepsilon_{\mathbf{e}_{3}}\right)  \left[  \varphi\left(  \mathbf{e}%
_{1}\right)  \cdot\mathbf{e}_{2}^{\ast}\right]  \\
&  +\left(  2\varepsilon_{\mathbf{e}_{2}}\tilde{\tau}+2\varepsilon
_{\mathbf{e}_{3}}\varepsilon_{\mathbf{e}_{1}^{\ast}}\tilde{\tau}^{\ast
}\right)  \left[  \varphi\left(  \mathbf{e}_{3}\right)  \cdot\mathbf{e}%
_{2}^{\ast}\right]  +\left(  2\varepsilon_{\mathbf{e}_{2}}\tilde{\tau}^{\ast
}+2\varepsilon_{\mathbf{e}_{3}}\varepsilon_{\mathbf{e}_{1}}\tilde{\tau
}\right)  \left[  \varphi\left(  \mathbf{e}_{2}\right)  \cdot\mathbf{e}%
_{3}^{\ast}\right]  .
\end{align*}
Since $\alpha$ and $\alpha^{\ast}$ have the same causal characters and
$\tilde{\tau}=\tilde{\tau}^{\ast},$ we can write
\begin{align*}
2\varepsilon_{\mathbf{e}_{1}}+2\varepsilon_{\mathbf{e}_{2}}\varepsilon
_{\mathbf{e}_{3}^{\ast}} &  =0,\text{ \ \ \ \ \ \ \ \ \ }2\varepsilon
_{\mathbf{e}_{1}}+2\varepsilon_{\mathbf{e}_{2}}\varepsilon_{\mathbf{e}_{3}%
}=0\\
2\varepsilon_{\mathbf{e}_{2}}\tilde{\tau}+2\varepsilon_{\mathbf{e}_{3}%
}\varepsilon_{\mathbf{e}_{1}^{\ast}}\tilde{\tau}^{\ast} &  =0,\text{
\ \ \ \ \ }2\varepsilon_{\mathbf{e}_{2}}\tilde{\tau}^{\ast}+2\varepsilon
_{\mathbf{e}_{3}}\varepsilon_{\mathbf{e}_{1}}\tilde{\tau}=0.
\end{align*}
Therefore, we find $\dfrac{d\Psi}{d\sigma}=0$ for any $\sigma\in I.$ On the
other hand, we know $\Psi\left(  \sigma_{0}\right)  =0$ and thus we have
$\Psi\left(  \sigma\right)  =0$ for any $\sigma\in I.$ As a result, we can say
that
\begin{equation}
\varphi\left(  \mathbf{e}_{i}\left(  \sigma\right)  \right)  =-\varepsilon
_{\mathbf{e}_{i}}\mathbf{e}_{i}^{\ast}\left(  \sigma\right)  ,\text{
\ \ \ \ \ }\forall\sigma\in I,\text{ \ \ }i=1,2,3.\label{10}%
\end{equation}

\qquad The map $g=\mu\varphi:\mathbb{E}_{1}^{3}\rightarrow\mathbb{E}_{1}^{3}$
is a p-similarity of $\mathbb{E}_{1}^{3}$. We examine an other function
$\Phi:I\rightarrow%
\mathbb{R}
$ such that
\[
\Phi\left(  \sigma\right)  =\left\Vert \frac{d}{d\sigma}g\left(  \alpha\left(
\sigma\right)  \right)  +\varepsilon_{\mathbf{e}_{1}}\frac{d}{d\sigma}%
\alpha^{\ast}\left(  \sigma\right)  \right\Vert ^{2}\text{ \ \ \ for }%
\forall\sigma\in I.
\]
Taking derivative of this function with respect to $\sigma$ we get
\begin{align*}
\frac{d\Phi}{d\sigma} &  =2g\left(  \frac{d^{2}\alpha}{d\sigma^{2}}\right)
\cdot g\left(  \frac{d\alpha}{d\sigma}\right)  +2\varepsilon_{\mathbf{e}_{1}%
}\left[  g\left(  \frac{d^{2}\alpha}{d\sigma^{2}}\right)  \cdot\frac
{d\alpha^{\ast}}{d\sigma}\right]  \\
&  +2\varepsilon_{\mathbf{e}_{1}}\frac{d^{2}\alpha^{\ast}}{d\sigma^{2}}\cdot
g\left(  \frac{d\alpha}{d\sigma}\right)  +2\left[  \frac{d^{2}\alpha^{\ast}%
}{d\sigma^{2}}\cdot\frac{d\alpha^{\ast}}{d\sigma}\right]  .
\end{align*}
Since the function $\varphi$ is linear map and we have $\left(  \ref{4}%
\right)  $ and $\left(  \ref{10}\right)  $, we can write
\[
\frac{d\Phi}{d\sigma}=2\varepsilon_{\mathbf{e}_{1}^{\ast}}\mu^{2}\frac
{\tilde{\kappa}}{\kappa^{2}}-2\varepsilon_{\mathbf{e}_{1}^{\ast}}\mu
\frac{\tilde{\kappa}}{\kappa\kappa^{\ast}}-2\varepsilon_{\mathbf{e}_{1}^{\ast
}}\mu\frac{\tilde{\kappa}^{\ast}}{\kappa\kappa^{\ast}}+2\varepsilon
_{\mathbf{e}_{1}^{\ast}}\frac{\tilde{\kappa}^{\ast}}{\left(  \kappa^{\ast
}\right)  ^{2}}.
\]
Using $\mu=\dfrac{\kappa}{\kappa^{\ast}},$ we have $\dfrac{d\Phi}{d\sigma}=0.$
Also, we can find
\[
\frac{d}{d\sigma}g\left(  \alpha\left(  \sigma_{0}\right)  \right)  =g\left(
\frac{1}{\kappa}\mathbf{e}_{1}\left(  \sigma_{0}\right)  \right)
=-\varepsilon_{\mathbf{e}_{1}}\frac{1}{\kappa^{\ast}}\mathbf{e}_{1}^{\ast
}\left(  \sigma_{0}\right)
\]
and we know
\[
\frac{d}{d\sigma}\alpha^{\ast}\left(  \sigma_{0}\right)  =\frac{1}%
{\kappa^{\ast}}\mathbf{e}_{1}^{\ast}\left(  \sigma_{0}\right)  .
\]
Then, we conclude that $\Phi\left(  \sigma_{0}\right)  =0.$ Hence,
$\Phi\left(  \sigma\right)  =0$ for $\forall\sigma\in I$. This means that
\[
\frac{d}{d\sigma}g\left(  \alpha\left(  \sigma\right)  \right)  =-\varepsilon
_{\mathbf{e}_{1}}\frac{d}{d\sigma}\alpha^{\ast}\left(  \sigma\right)
\]
or equivalently $\alpha^{\ast}\left(  \sigma\right)  =-\varepsilon
_{\mathbf{e}_{1}}g\left(  \alpha\left(  \sigma\right)  \right)  +\mathbf{b}$
where $\mathbf{b}$ is a constant vector. Then, the image of non-lightlike
curve $\alpha$ under the p-similarity $f=\vartheta\circ\left(  -\varepsilon
_{\mathbf{e}_{1}}g\right)  $, where $\vartheta:\mathbb{E}_{1}^{3}%
\rightarrow\mathbb{E}_{1}^{3}$ is a translation function determined by
$\mathbf{b}$, is the non-lightlike curve $\alpha^{\ast}.$ If the curves
$\alpha,\alpha^{\ast}$ are taken as the timelike curves, the p-similarity
transformation $f$ is an orientation-preserving transformation. Also, when the
curves $\alpha,\alpha^{\ast}$ are the spacelike curves, the p-similarity
transformation $f$ is an orientation-reversing transformation.
\end{proof}

\qquad Is it possible to say that two spacelike Frenet curves are equivalent
under orientation-preserving p-similarity? We can see the answer with the
following theorem.

\begin{theorem}
\label{space}Let $\alpha,\alpha^{\ast}:I\rightarrow\mathbb{E}_{1}^{3}$ be two
spacelike Frenet curves of class $C^{3}$ parameterized by the same spherical
arc length parameter $\sigma$, where $I\subset%
\mathbb{R}
$ is an open interval. Suppose that $\alpha$ and $\alpha^{\ast}$ have the same
p-shape curvature $\tilde{\kappa}=\tilde{\kappa}^{\ast}$ and $\tilde{\tau
}=-\tilde{\tau}^{\ast}$ for the p-shape torsions $\tilde{\tau},$ $\tilde{\tau
}^{\ast}$. Then there exists an orientation-preserving p-similarity $f$ of
$\mathbb{E}_{1}^{3}$ such that $\alpha^{\ast}=f\circ\alpha.$
\end{theorem}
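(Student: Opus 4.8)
The plan is to reduce Theorem \ref{space} to the Uniqueness Theorem (Theorem \ref{teklik}) by pre\-composing $\alpha^{\ast}$ with a fixed orientation-reversing Lorentzian isometry, exploiting that such a map flips the sign of the torsion while leaving the curvature and the causal characters untouched. Concretely, I would fix a linear pseudo-orthogonal map $\rho$ with $\det\rho=-1$, for instance the reflection $\left(x_{1},x_{2},x_{3}\right)\mapsto\left(x_{1},x_{2},-x_{3}\right)$, which preserves the Lorentzian inner product, and set $\beta=\rho\circ\alpha^{\ast}$. Because $\rho$ is an isometry it preserves the arc length $s$ and the curvature $\kappa$, hence the spherical arc length $\sigma$, so $\beta$ is a spacelike curve parameterized by the same $\sigma$ with $\kappa_{\beta}=\kappa_{\alpha^{\ast}}$; and from the determinantal formula $\left(\ref{2}\right)$ for $\tau$ together with $\det\rho=-1$ we get $\tau_{\beta}=-\tau_{\alpha^{\ast}}$.

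Consequently $\tilde{\kappa}_{\beta}=\tilde{\kappa}_{\alpha^{\ast}}=\tilde{\kappa}$ and $\tilde{\tau}_{\beta}=-\tilde{\tau}_{\alpha^{\ast}}=\tilde{\tau}$, where the last equality is exactly the hypothesis $\tilde{\tau}=-\tilde{\tau}^{\ast}$. Thus $\alpha$ and $\beta$ are two spacelike Frenet curves with the same p-shape curvature and the same p-shape torsion, so Theorem \ref{teklik}$\left(\mathbf{ii}\right)$ applies to the pair $\alpha,\beta$ and yields an orientation-reversing p-similarity $h$ of $\mathbb{E}_{1}^{3}$ with $\beta=h\circ\alpha$.

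Finally I would unwind the reduction. Since $\beta=\rho\circ\alpha^{\ast}$ we obtain $\alpha^{\ast}=\rho^{-1}\circ\beta=\left(\rho^{-1}\circ h\right)\circ\alpha$, and I set $f=\rho^{-1}\circ h$. As $\rho^{-1}$ and $h$ are both orientation-reversing and the p-similarities form a group under composition, $f$ is an orientation-preserving p-similarity of $\mathbb{E}_{1}^{3}$ satisfying $\alpha^{\ast}=f\circ\alpha$, which is the desired conclusion.

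The step needing the most care is the claim that $\rho$ reverses the torsion but changes neither the curvature nor the causal type of the Frenet frame; this rests on $\left(\ref{2}\right)$ and on the fact that a Lorentzian isometry preserves the causal character of every vector, so that $\beta$ has the same Frenet causal characters $\varepsilon_{\mathbf{e}_{i}}$ as $\alpha^{\ast}$, and hence, under the standing assumption that $\alpha$ and $\alpha^{\ast}$ are of the same spacelike sub-type, the same as $\alpha$. Alternatively, the theorem can be proved directly by repeating the argument of Theorem \ref{teklik} verbatim but aligning the frames at $\sigma_{0}$ by $\varphi\left(\mathbf{e}_{1}\right)=-\varepsilon_{\mathbf{e}_{1}}\mathbf{e}_{1}^{\ast}$, $\varphi\left(\mathbf{e}_{2}\right)=-\varepsilon_{\mathbf{e}_{2}}\mathbf{e}_{2}^{\ast}$, $\varphi\left(\mathbf{e}_{3}\right)=+\varepsilon_{\mathbf{e}_{3}}\mathbf{e}_{3}^{\ast}$; flipping only the binormal sign is what makes the torsion terms in $d\Psi/d\sigma$ cancel under $\tilde{\tau}=-\tilde{\tau}^{\ast}$ and simultaneously turns the resulting $f$ into an orientation-preserving map.
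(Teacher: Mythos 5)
Your proposal is correct, and it takes a genuinely different route from the paper's. The paper proves Theorem \ref{space} directly, by rerunning the machinery of Theorem \ref{teklik} with modified frame alignments: in each spacelike sub-case ($\mathbf{e}_{2}$ timelike, or $\mathbf{e}_{3}$ timelike) it chooses a Lorentzian motion $\varphi$ with $\varphi\left(\mathbf{e}_{1}\right)=\mathbf{e}_{1}^{\ast}$ and $\varphi\left(\mathbf{e}_{i}\right)=-\mathbf{e}_{i}^{\ast}$ for $i=2,3$ (resp.\ $\varphi\left(\mathbf{e}_{i}\right)=\mathbf{e}_{i}^{\ast}$ for all $i$), forms the auxiliary functions $\Psi$ and $\Phi$, uses $\tilde{\tau}=-\tilde{\tau}^{\ast}$ to make $d\Psi/d\sigma$ vanish, and assembles $f=\vartheta\circ g$. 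You instead reduce the statement to the already-proved Theorem \ref{teklik}: setting $\beta=\rho\circ\alpha^{\ast}$ with the reflection $\rho=\mathrm{diag}\left(1,1,-1\right)$, formula $\left(\ref{2}\right)$ gives $\kappa_{\beta}=\kappa_{\alpha^{\ast}}$ and $\tau_{\beta}=-\tau_{\alpha^{\ast}}$ (the determinant in the numerator picks up $\det\rho=-1$ while the denominator is unchanged), causal characters are untouched, so the pair $\left(\alpha,\beta\right)$ satisfies the hypotheses of Theorem \ref{teklik} part $\mathbf{ii}$, and composing the resulting orientation-reversing $h$ with $\rho^{-1}$ yields the desired orientation-preserving $f$. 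Your route buys economy: the $\Psi/\Phi$ computations and the sub-case analysis are done once, in Theorem \ref{teklik}, and the sign hypothesis $\tilde{\tau}=-\tilde{\tau}^{\ast}$ acquires a conceptual explanation (torsion reverses under an orientation-reversing isometry) rather than being verified term by term. The one point you should make explicit is that $\rho$, and hence $f=\rho^{-1}\circ h$, really lies in the group \textbf{Sim}$\left(\mathbb{E}_{1}^{3}\right)$: this is immediate under the paper's verbal definition of a p-similarity (homothety composed with a pseudo-orthogonal map and a translation), but under the strict split-quaternion formula $\left(\ref{05}\right)$ the available linear parts $\mu qrq^{-1}$ miss the orthochronous, determinant $-1$ component of the pseudo-orthogonal group that contains $\rho$; since the paper's own proofs invoke arbitrary ``Lorentzian motions'' with the same latitude, this is a defect of the paper's definition rather than of your argument, but a sentence acknowledging it would make the reduction airtight. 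Finally, your closing alternative alignment (flipping only the binormal sign relative to Theorem \ref{teklik}) is exactly what your reduction produces when unwound; it differs from the paper's chosen alignment only by the antipodal map $-I$, and both versions end in an orientation-preserving $f$ after the $-\varepsilon_{\mathbf{e}_{1}}$ correction.
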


\begin{proof}
The proof is similar to the proof of the Theorem \ref{teklik}. Let
$\mathbf{e}_{i}$, $\mathbf{e}_{i}^{\ast},$ $i=1,2,3,$ be a Frenet frame field
on $\alpha$, $\alpha^{\ast}$ and we choose any point $\sigma_{0}\in I.$ If
$\mathbf{e}_{2}$ and $\mathbf{e}_{2}^{\ast}$ are timelike vectors, There
exists a Lorentzian motion $\varphi$ of $\mathbb{E}_{1}^{3}$ such that
\[
\varphi\left(  \alpha\left(  \sigma_{0}\right)  \right)  =\alpha^{\ast}\left(
\sigma_{0}\right)  \text{, \ \ }\varphi\left(  \mathbf{e}_{1}\left(
\sigma_{0}\right)  \right)  =\mathbf{e}_{1}^{\ast}\left(  \sigma_{0}\right)
\text{ \ \ and \ }\varphi\left(  \mathbf{e}_{i}\left(  \sigma_{0}\right)
\right)  =-\mathbf{e}_{i}^{\ast}\left(  \sigma_{0}\right)  \text{ for }i=2,3.
\]
Let's consider the function $\Psi:I\rightarrow%
\mathbb{R}
$ defined by
\[
\Psi\left(  \sigma\right)  =\left\Vert \varphi\left(  \mathbf{e}_{1}\left(
\sigma\right)  \right)  -\mathbf{e}_{1}^{\ast}\left(  \sigma\right)
\right\Vert ^{2}+\left\Vert \varphi\left(  \mathbf{e}_{2}\left(
\sigma\right)  \right)  +\mathbf{e}_{2}^{\ast}\left(  \sigma\right)
\right\Vert ^{2}+\left\Vert \varphi\left(  \mathbf{e}_{3}\left(
\sigma\right)  \right)  +\mathbf{e}_{3}^{\ast}\left(  \sigma\right)
\right\Vert ^{2}.
\]
Then
\[
\frac{d\Psi}{d\sigma}=2\left(  \tilde{\tau}+\tilde{\tau}^{\ast}\right)
\left(  \varphi\left(  \mathbf{e}_{3}\right)  \cdot\mathbf{e}_{2}%
+\varphi\left(  \mathbf{e}_{2}\right)  \cdot\mathbf{e}_{3}\right)  =0.
\]
Due to $\Psi\left(  \sigma_{0}\right)  =0,$ we can write
\[
\varphi\left(  \mathbf{e}_{1}\left(  \sigma_{0}\right)  \right)
=\mathbf{e}_{1}^{\ast}\left(  \sigma\right)  \text{ \ \ and \ }\varphi\left(
\mathbf{e}_{i}\left(  \sigma\right)  \right)  =-\mathbf{e}_{i}^{\ast}\left(
\sigma\right)  \text{ for }i=2,3\text{ \ \ \ }\forall\sigma\in I.
\]
\ \ \ \ The map $g=\mu\varphi:\mathbb{E}_{1}^{3}\rightarrow\mathbb{E}_{1}^{3}$
is a p-similarity of $\mathbb{E}_{1}^{3}$. We examine the function
$\Phi:I\rightarrow%
\mathbb{R}
$ such that
\[
\Phi\left(  \sigma\right)  =\left\Vert \frac{d}{d\sigma}g\left(  \alpha\left(
\sigma\right)  \right)  -\frac{d}{d\sigma}\alpha^{\ast}\left(  \sigma\right)
\right\Vert ^{2}\text{ \ \ \ for }\forall\sigma\in I.
\]
Since we have $\dfrac{d\Phi}{d\sigma}=0$ and $\Phi\left(  \sigma_{0}\right)
=0,$ we get $\Phi\left(  \sigma\right)  =0$ for any $\sigma\in I.$ Namely, we
can write $\dfrac{d}{d\sigma}g\left(  \alpha\left(  \sigma\right)  \right)
=\dfrac{d}{d\sigma}\alpha^{\ast}\left(  \sigma\right)  $ or equivalently
$\alpha^{\ast}\left(  \sigma\right)  =g\left(  \alpha\left(  \sigma\right)
\right)  +\mathbf{b}$ where $\mathbf{b}$ is a constant vector. So, we have
$f=\vartheta\circ g,$ where $\vartheta:\mathbb{E}_{1}^{3}\rightarrow
\mathbb{E}_{1}^{3}$ is a translation function determined by $\mathbf{b,}$ is
orientation-preserving p-similarity transformation such that the image of the
spacelike curve $\alpha$ under $f$ is the spacelike curve $\alpha^{\ast},$
i.e. $\alpha^{\ast}=f\circ\alpha.$

\qquad In the same way, if we take $\mathbf{e}_{3}$ and $\mathbf{e}_{3}^{\ast
}$ as timelike vectors, we can find an orientation-preserving p-similarity $f$
which provides $\alpha^{\ast}=f\circ\alpha$ such that the functions $\Psi$ and
$\Phi$ are respectively defined by
\begin{align*}
\Psi\left(  \sigma\right)   &  =\left\Vert \varphi\left(  \mathbf{e}%
_{1}\left(  \sigma\right)  \right)  -\mathbf{e}_{1}^{\ast}\left(
\sigma\right)  \right\Vert ^{2}+\left\Vert \varphi\left(  \mathbf{e}%
_{2}\left(  \sigma\right)  \right)  -\mathbf{e}_{2}^{\ast}\left(
\sigma\right)  \right\Vert ^{2}+\left\Vert \varphi\left(  \mathbf{e}%
_{3}\left(  \sigma\right)  \right)  -\mathbf{e}_{3}^{\ast}\left(
\sigma\right)  \right\Vert ^{2},\\
\Phi\left(  \sigma\right)   &  =\left\Vert \frac{d}{d\sigma}g\left(
\alpha\left(  \sigma\right)  \right)  -\frac{d}{d\sigma}\alpha^{\ast}\left(
\sigma\right)  \right\Vert ^{2}\text{ \ \ \ for }\forall\sigma\in I.
\end{align*}

\end{proof}

\section{Construction of the non-lightlike Frenet curves by curves on the
Lorentzian and hyperbolic unit sphere}

\qquad Let $\mathbf{c}:I\rightarrow S_{1}^{2}$ be non-lightlike spherical
curve with the arc length parameter $\sigma$. The orthonormal frame $\left\{
\mathbf{c}\left(  \sigma\right)  ,\mathbf{t}\left(  \sigma\right)
,\mathbf{q}\left(  \sigma\right)  \right\}  $ along $\mathbf{c}$ is called
the\emph{ Sabban frame }of $\mathbf{c}$ if $\mathbf{t}\left(  \sigma\right)
=\dfrac{d\mathbf{c}}{d\sigma}$ is the unit tangent vector of $\mathbf{c}$ and
$\mathbf{q}\left(  \sigma\right)  =\mathbf{c}\left(  \sigma\right)
\times\mathbf{t}\left(  \sigma\right)  .$ Then we state spherical
Frenet-Serret formulas of the non-lightlike curve $\mathbf{c}$.

\qquad If the curve $\mathbf{c}$ is a timelike curve, i.e. $\mathbf{t}\left(
\sigma\right)  $ is timelike vector, we have the following spherical
Frenet-Serret formulas of $\mathbf{c}$:%
\begin{equation}
\frac{d}{d\sigma}%
\begin{bmatrix}
\mathbf{c}\\
\mathbf{t}\\
\mathbf{q}%
\end{bmatrix}
=%
\begin{bmatrix}
0 & 1 & 0\\
1 & 0 & k_{g}\\
0 & k_{g} & 0
\end{bmatrix}%
\begin{bmatrix}
\mathbf{c}\\
\mathbf{t}\\
\mathbf{q}%
\end{bmatrix}
\label{11}%
\end{equation}
\ \ \ \ If $\mathbf{q}\left(  \sigma\right)  $ is a timelike vector, we have
the following spherical Frenet-Serret formulas of $\mathbf{c}$:%
\begin{equation}
\frac{d}{d\sigma}%
\begin{bmatrix}
\mathbf{c}\\
\mathbf{t}\\
\mathbf{q}%
\end{bmatrix}
=%
\begin{bmatrix}
0 & -1 & 0\\
1 & 0 & k_{g}\\
0 & k_{g} & 0
\end{bmatrix}%
\begin{bmatrix}
\mathbf{c}\\
\mathbf{t}\\
\mathbf{q}%
\end{bmatrix}
\end{equation}
If $\mathbf{c}:I\rightarrow H_{0}^{2}$ is a spacelike spherical curve with the
arc length parameter $\sigma$, then spherical Frenet-Serret formulas of
$\mathbf{c}$ are
\begin{equation}
\frac{d}{d\sigma}%
\begin{bmatrix}
\mathbf{c}\\
\mathbf{t}\\
\mathbf{q}%
\end{bmatrix}
=%
\begin{bmatrix}
0 & -1 & 0\\
-1 & 0 & k_{g}\\
0 & -k_{g} & 0
\end{bmatrix}%
\begin{bmatrix}
\mathbf{c}\\
\mathbf{t}\\
\mathbf{q}%
\end{bmatrix}
\label{12}%
\end{equation}
since $\mathbf{c}\left(  \sigma\right)  $ is a timelike vector. $k_{g}\left(
\sigma\right)  =\varepsilon_{\mathbf{q}}\det\left(  \mathbf{c}\left(
\sigma\right)  \mathbf{,t}\left(  \sigma\right)  \mathbf{,}\dfrac{d\mathbf{t}%
}{d\sigma}\left(  \sigma\right)  \right)  $ is the geodesic curvature of
$\mathbf{c}$ for three different spherical Frenet-Serret formulas\textbf{.}

\qquad Let $k:I\rightarrow%
\mathbb{R}
$ be a function of class $C^{1}.$ We can describe a non-lightlike curve
$\alpha:I\rightarrow\mathbb{E}_{1}^{3}$ given by
\begin{equation}
\alpha\left(  \sigma\right)  =b\int e^{\int k\left(  \sigma\right)  d\sigma
}\mathbf{c}\left(  \sigma\right)  d\sigma+\mathbf{a}, \label{13}%
\end{equation}
where $\mathbf{a}$ is a constant vector and $b$ is a real constant. The fact
that $\sigma$ is arc spherical length parameter of $\alpha$ can be easily seen
because we have $\frac{\frac{d\alpha}{d\sigma}}{\left\Vert \frac{d\alpha
}{d\sigma}\right\Vert }=\mathbf{c}\left(  \sigma\right)  $. Then, we can state
a description of all Frenet curves in Minkowski 3-space.

\begin{proposition}
\label{prop}The non-lightlike curve $\alpha$ defined by $\left(
\ref{13}\right)  $ is a Frenet curve with shape curvature $\tilde{\kappa
}=k\left(  \sigma\right)  $ and shape torsion $\tilde{\tau}=\varepsilon
_{\mathbf{q}}k_{g}\left(  \sigma\right)  $ in the Minkowski 3-space.
Furthermore, all non-lightlike Frenet curves can be obtained in this way.
\end{proposition}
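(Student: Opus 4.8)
The plan is to differentiate the defining integral $\left(\ref{13}\right)$ and read off the Frenet data directly, then invert the construction for the converse. Write $\rho(\sigma)=b\,e^{\int k(\sigma)\,d\sigma}$, so that by the fundamental theorem of calculus $\frac{d\alpha}{d\sigma}=\rho\,\mathbf{c}$ and hence $\rho'=k\rho$. Since $\mathbf{c}$ lies on $S_{1}^{2}$ or $H_{0}^{2}$ it is a unit vector, so $\left\Vert \frac{d\alpha}{d\sigma}\right\Vert =\left\vert \rho\right\vert $ never vanishes; this already shows $\alpha$ is a non-lightlike Frenet curve and, comparing with $\left(\ref{4}\right)$, that $\sigma$ is its spherical arc length with $\kappa_{\alpha}=1/\left\vert \rho\right\vert $ and $\mathbf{e}_{1}=\mathbf{c}$ after fixing the sign of $b$.

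First I would compute the p-shape curvature. From $\kappa_{\alpha}=1/\left\vert \rho\right\vert $ one gets $\tilde{\kappa}_{\alpha}=-\frac{d\kappa_{\alpha}}{\kappa_{\alpha}\,d\sigma}=\frac{(1/\kappa_{\alpha})'}{1/\kappa_{\alpha}}=\frac{\rho'}{\rho}=k(\sigma)$, which is the first claim. Next I would differentiate once more: $\frac{d^{2}\alpha}{d\sigma^{2}}=\rho'\mathbf{c}+\rho\,\mathbf{t}$, because $\mathbf{c}'=\mathbf{t}$ in the Sabban frame. Comparing this with $\left(\ref{4}\right)$, the $\mathbf{c}$-components agree automatically (using $\rho'=k\rho=\tilde{\kappa}_{\alpha}\rho$) and the remaining term forces $\mathbf{e}_{2}=\mathbf{t}$. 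Since $\left\{ \mathbf{c},\mathbf{t},\mathbf{q}\right\} $ and $\left\{ \mathbf{e}_{1},\mathbf{e}_{2},\mathbf{e}_{3}\right\} $ are both pseudo-orthonormal frames agreeing in their first two members, their third members are parallel unit vectors of the same causal type, so $\mathbf{q}=\varepsilon_{\mathbf{q}}\,\mathbf{e}_{3}$ once the vector-product orientation is pinned down.

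With the frames identified, the p-shape torsion falls out by matching $\frac{d\mathbf{t}}{d\sigma}$ against the Frenet equation $\left(\ref{6}\right)$, namely $\frac{d\mathbf{e}_{2}}{d\sigma}=\varepsilon_{\mathbf{e}_{3}}\mathbf{e}_{1}+\tilde{\tau}_{\alpha}\,\mathbf{e}_{3}$, using the appropriate spherical Frenet--Serret formula among $\left(\ref{11}\right)$, the unlabeled $\mathbf{q}$-timelike one, and $\left(\ref{12}\right)$. Reading off the $\mathbf{e}_{3}$-component gives $\tilde{\tau}_{\alpha}=\varepsilon_{\mathbf{q}}k_{g}$. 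For the converse that all non-lightlike Frenet curves arise this way, given such an $\alpha$ I would set $\mathbf{c}=\mathbf{e}_{1}$, a unit spherical curve on $S_{1}^{2}$ or $H_{0}^{2}$ according to the causal character of $\alpha$, and $k=\tilde{\kappa}_{\alpha}$; integrating $\tilde{\kappa}_{\alpha}=\frac{d}{d\sigma}\log(1/\kappa_{\alpha})$ yields $1/\kappa_{\alpha}=b\,e^{\int k\,d\sigma}$ for a suitable constant $b$, and since $\frac{d\alpha}{d\sigma}=\frac{1}{\kappa_{\alpha}}\mathbf{e}_{1}=b\,e^{\int k\,d\sigma}\mathbf{c}$, a final integration recovers exactly the form $\left(\ref{13}\right)$.

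The part requiring the most care is the sign bookkeeping in the torsion computation: the three spherical Frenet--Serret systems carry different signs in their $\mathbf{c}$- and $\mathbf{t}$-rows, and the causal characters $\varepsilon_{\mathbf{e}_{1}},\varepsilon_{\mathbf{e}_{2}},\varepsilon_{\mathbf{e}_{3}}$ differ between the timelike-$\alpha$ and spacelike-$\alpha$ cases. One must check all cases separately to confirm that the single clean formula $\tilde{\tau}_{\alpha}=\varepsilon_{\mathbf{q}}k_{g}$ emerges in each, and in particular verify that the relation $\mathbf{c}'=\pm\mathbf{t}$ carries consistently through the identification $\mathbf{e}_{2}=\pm\mathbf{t}$ so that the sign of $\tilde{\tau}_{\alpha}$ is not disturbed. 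Everything else is routine differentiation and comparison of matrix entries.
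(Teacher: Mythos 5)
Your proposal is correct and follows the paper's overall strategy: differentiate the defining integral $\left(\ref{13}\right)$, recognize the Sabban data of $\mathbf{c}$ inside the Frenet data of $\alpha$, and for the converse set $\mathbf{c}=\mathbf{e}_{1}$, $k=\tilde{\kappa}_{\alpha}$ and integrate --- your converse argument is essentially word-for-word the paper's. Where you genuinely diverge is in how the invariants are extracted. The paper never identifies $\mathbf{e}_{2}=\pm\mathbf{t}$ or $\mathbf{e}_{3}=\pm\mathbf{q}$ at all: it computes the first three derivatives of $\alpha$, checks $\frac{d\alpha}{d\sigma}\times\frac{d^{2}\alpha}{d\sigma^{2}}=b^{2}e^{2\int k\left(\sigma\right)d\sigma}\left(\mathbf{c}\times\frac{d\mathbf{c}}{d\sigma}\right)\neq0$, and then substitutes the derivatives into the similarity-invariant formulas $\left(\ref{8}\right)$ and $\left(\ref{9}\right)$; because those formulas are written directly in terms of derivatives of $\alpha$, the torsion collapses to $\det\left(\mathbf{c},\frac{d\mathbf{c}}{d\sigma},\frac{d\mathbf{t}}{d\sigma}\right)=\varepsilon_{\mathbf{q}}k_{g}$ in one stroke, uniformly in the causal type. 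Your route --- matching the Frenet equations $\left(\ref{6}\right)$ against the three spherical Frenet-Serret systems --- is equally valid but pays for it with exactly the case-by-case sign bookkeeping you flag at the end and leave unexecuted: three causal cases, the sign in $\mathbf{c}^{\prime}=\pm\mathbf{t}$, and the orientation of $\mathbf{e}_{3}$ relative to $\mathbf{q}$ (your assertion $\mathbf{q}=\varepsilon_{\mathbf{q}}\mathbf{e}_{3}$ is not forced by causal type alone; it is part of what must be checked). That bookkeeping does close in every case, but it is the entire substance of the claim $\tilde{\tau}=\varepsilon_{\mathbf{q}}k_{g}$, so it should be carried out rather than described; the paper's formula-based computation is the cheaper way to make the signs come out automatically.

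One small correction of logic: regularity of $\alpha$, i.e.\ $\left\Vert \frac{d\alpha}{d\sigma}\right\Vert =\left\vert \rho\right\vert \neq0$, does not by itself make $\alpha$ a Frenet curve --- a non-lightlike straight line is regular yet has no Frenet frame. What actually makes $\alpha$ Frenet is that $\frac{d^{2}\alpha}{d\sigma^{2}}=\rho^{\prime}\mathbf{c}+\rho\mathbf{t}$ has the nonzero, non-lightlike normal component $\rho\mathbf{t}$, equivalently the paper's verification that $\frac{d\alpha}{d\sigma}\times\frac{d^{2}\alpha}{d\sigma^{2}}\neq0$. Your own second-derivative computation supplies exactly this, so the issue is one of attribution rather than substance: the claim should be placed after that computation, not before it.
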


\begin{proof}
First, from $\left(  \ref{13}\right)  $ we can write
\begin{align*}
\frac{d\alpha}{d\sigma} &  =be^{\int k\left(  \sigma\right)  d\sigma
}\mathbf{c}\left(  \sigma\right)  ,\text{ \ \ }\frac{d^{2}\alpha}{d\sigma^{2}%
}=be^{\int k\left(  \sigma\right)  d\sigma}\left[  k\left(  \sigma\right)
\mathbf{c}\left(  \sigma\right)  +\frac{d\mathbf{c}}{d\sigma}\right]  \\
\frac{d^{3}\alpha}{d\sigma^{3}} &  =be^{\int k\left(  \sigma\right)  d\sigma
}\left[  \left\{  k^{2}\left(  \sigma\right)  +\frac{dk}{d\sigma}\right\}
\mathbf{c}\left(  \sigma\right)  +2k\left(  \sigma\right)  \frac{d\mathbf{c}%
}{d\sigma}+\frac{d^{2}\mathbf{c}}{d\sigma^{2}}\right]  .
\end{align*}
Then, because of the equation
\[
\frac{d\alpha}{d\sigma}\times\frac{d^{2}\alpha}{d\sigma^{2}}=b^{2}e^{2\int
k\left(  \sigma\right)  d\sigma}\left(  \mathbf{c}\left(  \sigma\right)
\times\frac{d\mathbf{c}}{d\sigma}\right)  \neq0,
\]
we have $\alpha$ is non-lightlike Frenet curve. Using $\left(  \ref{8}\right)
$ and $\left(  \ref{9}\right)  $ we find that
\[
\tilde{\kappa}=k\left(  \sigma\right)  \ \ \ \text{and}\ \ \ \tilde{\tau}%
=\det\left(  \mathbf{c},\frac{d\mathbf{c}}{d\sigma},\dfrac{d\mathbf{t}%
}{d\sigma}\right)  =\varepsilon_{\mathbf{q}}k_{g}\left(  \sigma\right)  .
\]
\ \ \ \ Conversely, suppose that $\alpha:I\rightarrow\mathbb{E}_{1}^{3}$ is a
non-lightlike regular curve parameterized by a spherical arc length parameter
$\sigma.$ Denote by $\kappa\left(  \sigma\right)  $ and $\tau\left(
\sigma\right)  $ the curvature and the torsion of $\mathbf{c,}$ respectively.
Let $\mathbf{c}$ be the spherical indicator of $\alpha$ such that
$\mathbf{c}:I\rightarrow\mathbb{E}_{1}^{3}$ is given by
\begin{equation}
\mathbf{c}\left(  \sigma\right)  =\mathbf{e}_{1}\left(  \sigma\right)
=\frac{\frac{d\alpha}{d\sigma}}{\left\Vert \frac{d\alpha}{d\sigma}\right\Vert
}=\kappa\left(  \sigma\right)  \frac{d\alpha}{d\sigma}.\label{15}%
\end{equation}
We can say that $\sigma$ is an arc length parameter of $\mathbf{c}$ and
$k_{g}=\varepsilon_{\mathbf{q}}\det\left(  \mathbf{c}\left(  \sigma\right)
,\mathbf{t}\left(  \sigma\right)  ,\dfrac{d\mathbf{t}\left(  \sigma\right)
}{d\sigma}\right)  =\varepsilon_{\mathbf{q}}\tilde{\tau}$ is the geodesic
curvature of $\mathbf{c}$. If we take $k\left(  \sigma\right)  =\tilde{\kappa
}\left(  \sigma\right)  ,$ then
\begin{align*}
\int e^{\int k\left(  \sigma\right)  d\sigma}\mathbf{c}\left(  \sigma\right)
d\sigma &  =\int e^{\int-\frac{d\kappa}{\kappa d\sigma}d\sigma}\mathbf{c}%
\left(  \sigma\right)  d\sigma=e^{b_{0}}\int\frac{1}{\kappa}\mathbf{c}\left(
\sigma\right)  d\sigma\\
&  =e^{b_{0}}\int\frac{d\alpha}{d\sigma}d\sigma=e^{b_{0}}\alpha\left(
\sigma\right)  +\mathbf{a}_{0}%
\end{align*}
where $b_{0}$ is a real constant and $\mathbf{a}_{0}$ is a constant vector.
Hence, we can write
\[
\alpha\left(  \sigma\right)  =b\int e^{\int k\left(  \sigma\right)  d\sigma
}\mathbf{c}\left(  \sigma\right)  d\sigma+\mathbf{a.}%
\]

\end{proof}

\begin{theorem}
\label{var}(Existence Theorem) Let $z_{i}:I\rightarrow%
\mathbb{R}
,$ $i=1,2$, be two functions of class $C^{1}$ and $\mathbf{e}_{1}^{0},$
$\mathbf{e}_{2}^{0},$ $\mathbf{e}_{3}^{0}$ be an right-handed orthonormal
triad of vectors at a point $x_{0}$ in the Minkowski 3-space $\mathbb{E}%
_{1}^{3}.$ According to a p-similarity with center $x_{0}$ there exists a
unique non-lightlike curve $\alpha:I\rightarrow\mathbb{E}_{1}^{3}$ such that
$\alpha$ satisfies the following conditions:

$\left(  i\right)  $ There exists a $\sigma_{0}\in I$ such that $\alpha\left(
\sigma_{0}\right)  =x_{0}$ and the Frenet-Serret frame of $\alpha$ at $x_{0}$
is $\left\{  \mathbf{e}_{1}^{0},\mathbf{e}_{2}^{0},\mathbf{e}_{3}^{0}\right\}
.$

$\left(  ii\right)  $ $\tilde{\kappa}\left(  \sigma\right)  =z_{1}\left(
\sigma\right)  $ and $\tilde{\tau}\left(  \sigma\right)  =\varepsilon
_{\mathbf{e}_{3}^{0}}z_{2}\left(  \sigma\right)  $ for any $\sigma\in I.$
\end{theorem}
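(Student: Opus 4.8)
The plan is to run the construction of Proposition \ref{prop} in reverse: from the prescribed data $(z_{1},z_{2})$ I will first build a spherical curve $\mathbf{c}$ whose geodesic curvature realizes $z_{2}$, then integrate formula $\left(\ref{13}\right)$ with $k=z_{1}$ to recover $\alpha$, and finally read off its p-shape from Proposition \ref{prop}. Existence thus reduces to solving the spherical Frenet--Serret system, and the uniqueness clause will be inherited from the Uniqueness Theorem \ref{teklik}.

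First I would fix the causal type from the initial triad. Since $\left\{\mathbf{e}_{1}^{0},\mathbf{e}_{2}^{0},\mathbf{e}_{3}^{0}\right\}$ is an orthonormal triad in $\mathbb{E}_{1}^{3}$, exactly one of its vectors is timelike, and the causal character of $\mathbf{e}_{1}^{0}$ dictates the sphere on which the spherical indicator $\mathbf{c}=\mathbf{e}_{1}$ must live: if $\mathbf{e}_{1}^{0}$ is timelike then $\alpha$ is timelike and $\mathbf{c}\subset H_{0}^{2}$, while if $\mathbf{e}_{1}^{0}$ is spacelike then $\alpha$ is spacelike and $\mathbf{c}\subset S_{1}^{2}$, with either $\mathbf{t}$ or $\mathbf{q}$ timelike. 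Each of these possibilities singles out one of the spherical Frenet--Serret systems $\left(\ref{11}\right)$--$\left(\ref{12}\right)$.

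Second, I would solve the chosen linear system for the Sabban frame $\left\{\mathbf{c},\mathbf{t},\mathbf{q}\right\}$ with geodesic curvature $k_{g}=z_{2}$ and initial data $\mathbf{c}(\sigma_{0})=\mathbf{e}_{1}^{0}$, $\mathbf{t}(\sigma_{0})=\mathbf{e}_{2}^{0}$, $\mathbf{q}(\sigma_{0})=\mathbf{e}_{3}^{0}$. As $z_{2}\in C^{1}$ the coefficients are continuous, so the standard existence-and-uniqueness theorem for linear ODE systems gives a solution on all of $I$; the key point is that the coefficient matrix of each spherical system is skew-symmetric with respect to the Lorentzian product, so the Gram matrix of $\left\{\mathbf{c},\mathbf{t},\mathbf{q}\right\}$ is constant along the flow and stays equal to $\mathrm{diag}(\varepsilon_{\mathbf{e}_{1}^{0}},\varepsilon_{\mathbf{e}_{2}^{0}},\varepsilon_{\mathbf{e}_{3}^{0}})$, keeping $\mathbf{c}$ on its pseudo-sphere. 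Setting $k=z_{1}$ and defining $\alpha$ by $\left(\ref{13}\right)$, with the free constants $b$ and $\mathbf{a}$ chosen so that $\alpha(\sigma_{0})=x_{0}$ and the initial speed is normalized, Proposition \ref{prop} yields at once that $\alpha$ is a non-lightlike Frenet curve with $\tilde{\kappa}=k=z_{1}$ and $\tilde{\tau}=\varepsilon_{\mathbf{q}}k_{g}=\varepsilon_{\mathbf{e}_{3}^{0}}z_{2}$, the last identity because $\mathbf{q}=\mathbf{c}\times\mathbf{t}=\mathbf{e}_{1}\times\mathbf{e}_{2}=\mathbf{e}_{3}$ forces $\varepsilon_{\mathbf{q}}=\varepsilon_{\mathbf{e}_{3}^{0}}$. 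Condition $\left(i\right)$ follows by evaluating $\mathbf{e}_{1}=\mathbf{c}$, $\mathbf{e}_{2}=\mathbf{t}$, $\mathbf{e}_{3}=\mathbf{q}$ at $\sigma_{0}$, and uniqueness modulo a p-similarity centered at $x_{0}$ is exactly the content of Theorem \ref{teklik}.

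The step I expect to be the main obstacle is the second one: confirming that the ODE solution remains a genuine Sabban frame on the correct pseudo-sphere over the entire interval $I$ rather than only locally, and organizing the three causal sub-cases uniformly, since the sign patterns in $\left(\ref{11}\right)$--$\left(\ref{12}\right)$ differ and the skew-symmetry that guarantees invariance of the Gram matrix must be checked separately in each.
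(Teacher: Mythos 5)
Your proposal is correct and follows essentially the same route as the paper: solve the spherical Frenet--Serret system with $k_{g}=z_{2}$ and initial frame $\left\{ \mathbf{e}_{1}^{0},\mathbf{e}_{2}^{0},\mathbf{e}_{3}^{0}\right\}$, use the Lorentzian skew-symmetry $\mathbf{M}^{t}\mathbf{I}^{\ast}+\mathbf{I}^{\ast}\mathbf{M}=0$ to keep the frame pseudo-orthonormal on all of $I$, then integrate $\left( \ref{13}\right)$ with $k=z_{1}$ and invoke Proposition \ref{prop}. The case split by which frame vector is timelike and the appeal to Theorem \ref{teklik} for the uniqueness clause also match the paper's treatment.
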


\begin{proof}
We consider the system of differential equations
\begin{equation}
\frac{d\mathbf{X}}{d\sigma}\left(  \sigma\right)  =\mathbf{M}\left(
\sigma\right)  \mathbf{X}\left(  \sigma\right)  \label{14}%
\end{equation}
where $\mathbf{X}\left(  \sigma\right)  =%
\begin{bmatrix}
\mathbf{c}\left(  \sigma\right)   & \mathbf{t}\left(  \sigma\right)   &
\mathbf{q}\left(  \sigma\right)
\end{bmatrix}
$ and $\mathbf{M}$ is one of the following matrices depending on whether
$\mathbf{t}\left(  \sigma\right)  \mathbf{,}$ $\mathbf{c}\left(
\sigma\right)  $ or $\mathbf{q}\left(  \sigma\right)  $ is a timelike vector,
respectively:
\[%
\begin{bmatrix}
0 & 1 & 0\\
1 & 0 & z_{2}\\
0 & z_{2} & 0
\end{bmatrix}
\text{,\ \ }%
\begin{bmatrix}
0 & -1 & 0\\
-1 & 0 & z_{2}\\
0 & -z_{2} & 0
\end{bmatrix}
\text{ \ or\ \ }%
\begin{bmatrix}
0 & -1 & 0\\
1 & 0 & z_{2}\\
0 & z_{2} & 0
\end{bmatrix}
.
\]
The system $\left(  \ref{14}\right)  $ has an unique solution $\mathbf{X}%
\left(  \sigma\right)  $ which satisfies initial conditions $\mathbf{X}\left(
\sigma_{0}\right)  =%
\begin{bmatrix}
\mathbf{e}_{1}^{0} & \mathbf{e}_{2}^{0} & \mathbf{e}_{3}^{0}%
\end{bmatrix}
$ for $\sigma_{0}\in I.$ If $\mathbf{I}$ is the unit matrix and$\ \mathbf{X}%
^{t}$ is the transposed matrix of $\mathbf{X}\left(  \sigma\right)  ,$ then we
can obtain
\begin{align*}
\frac{d}{d\sigma}\left(  \mathbf{I}^{\ast}\mathbf{X}^{t}\mathbf{I}^{\ast
}\mathbf{X}\right)   &  =\mathbf{I}^{\ast}\frac{d}{d\sigma}\mathbf{X}%
^{t}\mathbf{I}^{\ast}\mathbf{X}+\mathbf{I}^{\ast}\mathbf{X}^{t}\mathbf{I}%
^{\ast}\frac{d}{d\sigma}\mathbf{X}\\
&  =\mathbf{I}^{\ast}\mathbf{X}^{t}\mathbf{M}^{t}\mathbf{I}^{\ast}%
\mathbf{X}+\mathbf{I}^{\ast}\mathbf{X}^{t}\mathbf{I}^{\ast}\mathbf{MX}\\
&  =\mathbf{I}^{\ast}\mathbf{X}^{t}\left(  \mathbf{M}^{t}\mathbf{I}^{\ast
}+\mathbf{I}^{\ast}\mathbf{M}\right)  \mathbf{X}=0
\end{align*}
using the equation $\mathbf{M}^{t}\mathbf{I}^{\ast}+\mathbf{I}^{\ast
}\mathbf{M}=\left[  0\right]  _{3\times3},$ where $\mathbf{I}^{\ast
}=diag\left(  -1,1,1\right)  ,$ $diag\left(  1,-1,1\right)  $ or $diag\left(
1,1,-1\right)  ,$ when $\mathbf{c}\left(  \sigma\right)  $, $\mathbf{t}\left(
\sigma\right)  $\textbf{ }or $\mathbf{q}\left(  \sigma\right)  $ is a timelike
vector, respectively. Also, we have $\mathbf{I}^{\ast}\mathbf{X}^{t}\left(
\sigma_{0}\right)  \mathbf{I}^{\ast}\mathbf{X}\left(  \sigma_{0}\right)
=\mathbf{I}$ since $\left\{  \mathbf{e}_{1}^{0},\mathbf{e}_{2}^{0}%
,\mathbf{e}_{3}^{0}\right\}  $ is the orthonormal frame. As a result, we find
$\mathbf{I}^{\ast}\mathbf{X}^{t}\left(  \sigma\right)  \mathbf{I}^{\ast
}\mathbf{X}\left(  \sigma\right)  =\mathbf{I}$ for any $\sigma\in I.$ This
means that the vector fields $\mathbf{t}\left(  \sigma\right)  \mathbf{,}$
$\mathbf{c}\left(  \sigma\right)  $ and $\mathbf{q}\left(  \sigma\right)  $
form a right-handed orthonormal frame field.

\qquad Let $\alpha:I\rightarrow\mathbb{E}_{1}^{3}$ be the regular
non-lightlike curve given by
\[
\alpha\left(  \sigma\right)  =b\int_{\sigma_{0}}^{\sigma}e^{\int z_{1}\left(
\sigma\right)  d\sigma}\mathbf{c}\left(  \sigma\right)  d\sigma+x_{0},\text{
\ \ \ \ \ \ \ \ }\sigma\in I,\text{ }b>0.
\]
By the proposition $\left(  \ref{prop}\right)  ,$ we get that the
Frenet-Serret frame field of $\alpha$ is
\[
\left\{  \mathbf{e}_{1}\left(  \sigma\right)  =\mathbf{c}\left(
\sigma\right)  \mathbf{,}\text{ }\mathbf{\mathbf{e}_{2}}\left(  \sigma\right)
\mathbf{=\mathbf{t}\left(  \sigma\right)  },\text{ }\mathbf{e}_{3}\left(
\sigma\right)  =\mathbf{q}\left(  \sigma\right)  \right\}
\]
and Frenet-Serret frame of $\alpha$ at $x_{0}=\alpha\left(  \sigma_{0}\right)
$ is
\[
\left\{  \mathbf{e}_{1}^{0}\left(  \sigma_{0}\right)  =\mathbf{c}\left(
\sigma_{0}\right)  \mathbf{,}\text{ }\mathbf{\mathbf{e}}_{2}^{0}\left(
\sigma_{0}\right)  \mathbf{=t}\left(  \sigma_{0}\right)  ,\text{ }%
\mathbf{e}_{3}^{0}\left(  \sigma_{0}\right)  =\mathbf{q}\left(  \sigma
_{0}\right)  \right\}  .
\]
Besides, the functions $z_{1}$ and $\varepsilon_{\mathbf{e}_{3}^{0}}z_{2}$ are
the p-shape curvature and p-shape torsion of $\alpha$, respectively.
\end{proof}

\qquad From Theorems $\ref{teklik}$ and $\ref{var},$ we get the following
theorem which is an analogue of the fundamental theorem of curves.

\begin{theorem}
\label{tek}Let $z_{i}:I\rightarrow%
\mathbb{R}
,$ $i=1,2$, be two functions of class $C^{1}.$ According to p-similarity there
exists a unique non-lightlike Frenet curve with p-shape curvature $z_{1}$ and
p-shape torsion $z_{2}.$
\end{theorem}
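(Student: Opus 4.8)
The plan is to obtain this fundamental theorem as a direct synthesis of the existence result (Theorem \ref{var}) and the uniqueness result (Theorem \ref{teklik}), playing the roles of the two halves of a classical fundamental theorem of curves. No new computation is needed beyond reconciling the two statements.

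For existence, I would fix an arbitrary base point $x_0\in\mathbb{E}_1^3$ together with a right-handed orthonormal triad $\{\mathbf{e}_1^0,\mathbf{e}_2^0,\mathbf{e}_3^0\}$ of the prescribed causal type, and then invoke Theorem \ref{var} on the pair of $C^1$ data. The one point requiring attention is that Theorem \ref{var} delivers a curve whose p-shape torsion is $\varepsilon_{\mathbf{e}_3^0}z_2$ rather than $z_2$ itself; since $\varepsilon_{\mathbf{e}_3^0}=\pm1$ is fixed once the triad is chosen, I would feed the $C^1$ function $\varepsilon_{\mathbf{e}_3^0}z_2$ into Theorem \ref{var} as its second datum, so that the resulting p-shape torsion becomes $\varepsilon_{\mathbf{e}_3^0}(\varepsilon_{\mathbf{e}_3^0}z_2)=z_2$ while the p-shape curvature is $z_1$. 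This produces a non-lightlike Frenet curve $\alpha$ realizing the prescribed p-shape $(z_1,z_2)$.

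For uniqueness, I would take any second non-lightlike Frenet curve $\alpha^{\ast}$ of class $C^3$, parameterized by the same spherical arc length $\sigma$, carrying the same causal character as $\alpha$ and the same p-shape curvature $z_1$ and p-shape torsion $z_2$. Then $\tilde{\kappa}_{\alpha}=\tilde{\kappa}_{\alpha^{\ast}}$ and $\tilde{\tau}_{\alpha}=\tilde{\tau}_{\alpha^{\ast}}$, so Theorem \ref{teklik} applies verbatim and furnishes an $f\in\textbf{Sim}\left(\mathbb{E}_1^3\right)$ with $\alpha^{\ast}=f\circ\alpha$ (orientation-preserving when the curves are timelike, orientation-reversing when they are spacelike). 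Hence the realizing curve is unique up to the group of p-similarities, which is exactly the claimed assertion.

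The main obstacle, modest as it is, will be the bookkeeping of causal characters together with the sign $\varepsilon_{\mathbf{e}_3^0}$: one must ensure that the initial triad selected in the existence step is consistent with the causal type for which uniqueness is being invoked, so that the two theorems dovetail and the sign introduced by Theorem \ref{var} is correctly absorbed. Once this matching of causal data is pinned down, the two cited theorems combine immediately and the proof is complete.
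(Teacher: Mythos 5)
Your proposal takes essentially the same route as the paper: the paper offers no separate argument for Theorem \ref{tek} at all, deriving it in one line as a consequence of the uniqueness theorem (Theorem \ref{teklik}) and the existence theorem (Theorem \ref{var}). In fact you supply more care than the paper does — absorbing the sign $\varepsilon_{\mathbf{e}_{3}^{0}}$ by feeding $\varepsilon_{\mathbf{e}_{3}^{0}}z_{2}$ into Theorem \ref{var}, and restricting uniqueness to curves of the same causal character (which is genuinely needed, since p-similarities preserve causal type) — both points the paper leaves implicit.
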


\subsection{Forming a non-lightlike curve from its p-shape}

\qquad Let $\alpha:I\rightarrow\mathbb{E}_{1}^{3}$ be a non-lightlike curve
with the spherical arc length parameter $\sigma$ such that the ordered pair
$\left(  \tilde{\kappa}_{\alpha},\tilde{\tau}_{\alpha}\right)  $ is p-shape of
the $\alpha$ defined by $\left(  \ref{000}\right)  .$ From the Theorem
$\ref{tek}$ we have that $\alpha$ is uniquely determined by its p-shape
according to p-similarity in the Minkowski 3-space. First we define fixed
right-handed orthonormal triad of non-lightlike vectors $\mathbf{e}_{1}%
^{0}\mathbf{,}$ $\mathbf{\mathbf{e}}_{2}^{0},$ $\mathbf{e}_{3}^{0}.$ When
$\mathbf{t}\left(  \sigma\right)  \mathbf{,}$ $\mathbf{c}\left(
\sigma\right)  $ or $\mathbf{q}\left(  \sigma\right)  $ is timelike vector, we
take respectively differential equations
\begin{equation}
\frac{d\mathbf{c}}{d\sigma}=\mathbf{t}\left(  \sigma\right)  ,\text{
\ \ }\frac{d\mathbf{t}}{d\sigma}=\mathbf{c}\left(  \sigma\right)  +\tilde
{\tau}_{\alpha}\mathbf{q}\left(  \sigma\right)  ,\text{ \ \ \ }\frac
{d\mathbf{q}}{d\sigma}=\tilde{\tau}_{\alpha}\mathbf{t}\left(  \sigma\right)
\label{d1}%
\end{equation}%
\begin{equation}
\frac{d\mathbf{c}}{d\sigma}=-\mathbf{t}\left(  \sigma\right)  ,\text{
\ \ }\frac{d\mathbf{t}}{d\sigma}=-\mathbf{c}\left(  \sigma\right)
+\tilde{\tau}_{\alpha}\mathbf{q}\left(  \sigma\right)  ,\text{ \ \ \ }%
\frac{d\mathbf{q}}{d\sigma}=-\tilde{\tau}_{\alpha}\mathbf{t}\left(
\sigma\right)  \label{d2}%
\end{equation}%
\begin{equation}
\frac{d\mathbf{c}}{d\sigma}=-\mathbf{t}\left(  \sigma\right)  ,\text{
\ \ }\frac{d\mathbf{t}}{d\sigma}=\mathbf{c}\left(  \sigma\right)  -\tilde
{\tau}_{\alpha}\mathbf{q}\left(  \sigma\right)  ,\text{ \ \ \ }\frac
{d\mathbf{q}}{d\sigma}=-\tilde{\tau}_{\alpha}\mathbf{t}\left(  \sigma\right)
.\label{d3}%
\end{equation}
The unique solution of one of these differential equations with initial
conditions $\mathbf{e}_{1}^{0}\mathbf{,}$ $\mathbf{\mathbf{e}}_{2}^{0},$
$\mathbf{e}_{3}^{0},$ determine a spherical non-lightlike curve $\mathbf{c=c}%
\left(  \sigma\right)  $ such that $\mathbf{c}\left(  \sigma_{0}\right)
=\mathbf{e}_{1}^{0}$ for some $\sigma_{0}\in I.$ Let $\rho\left(
\sigma\right)  =\int_{\sigma_{1}}^{\sigma}\tilde{\kappa}_{\alpha}\left(
\sigma\right)  d\sigma$ for fixed $\sigma_{1}\in I$. Using the equation
$\left(  \ref{13}\right)  $ and proposition $\ref{prop}$ we can find the
non-lightlike curve
\begin{equation}
\alpha\left(  \sigma\right)  =\alpha_{0}+\int_{\sigma_{0}}^{\sigma}%
e^{\rho\left(  \sigma\right)  }\mathbf{c}\left(  \sigma\right)  d\sigma
\label{d0}%
\end{equation}
passes through a point $\alpha_{0}=\alpha\left(  \sigma_{0}\right)  .$ Now, we
show a few examples of the non-lightlike curves constructed by above procedure.

\begin{example}
\label{or}Let p-shape $\left(  \tilde{\kappa}_{\alpha},\tilde{\tau}_{\alpha
}\right)  $ of the $\alpha:I\rightarrow\mathbb{E}_{1}^{3}$ be $\left(
0,a\right)  ,$ where $a\neq0$ is real constant. We can find $\rho\left(
\sigma\right)  =0$ for any $\sigma\in I$.

$\mathbf{i)}$ We take the unit vector $\mathbf{t}\left(  \sigma\right)  $ as
timelike vector. Choose initial conditions
\begin{equation}
\mathbf{e}_{1}^{0}=\left(  0,-\frac{1}{\sqrt{1+a^{2}}},\frac{a}{\sqrt{1+a^{2}%
}}\right)  \mathbf{,}\text{ }\mathbf{\mathbf{e}}_{2}^{0}=\left(  1,0,0\right)
,\text{ }\mathbf{e}_{3}^{0}=\left(  0,\frac{a}{\sqrt{1+a^{2}}},\frac{1}%
{\sqrt{1+a^{2}}}\right)  . \label{i1}%
\end{equation}
Then, the system $\left(  \ref{d1}\right)  $ describes a spherical timelike
curve $\mathbf{c}:I\rightarrow S_{1}^{2}$ defined by%
\begin{equation}
\mathbf{c}\left(  \sigma\right)  =\left(  \frac{1}{\sqrt{1+a^{2}}}\sinh\left(
\sqrt{1+a^{2}}\sigma\right)  ,-\frac{1}{\sqrt{1+a^{2}}}\cosh\left(
\sqrt{1+a^{2}}\sigma\right)  ,\frac{a}{\sqrt{1+a^{2}}}\right)  \label{i2}%
\end{equation}
with $\mathbf{c}\left(  0\right)  =\mathbf{e}_{1}^{0},$ in the Minkowski
3-space. Solving the equation $\left(  \ref{d0}\right)  $ we obtain the
spacelike curve parameterized by
\[
\alpha\left(  \sigma\right)  =\left(  \frac{1}{1+a^{2}}\cosh\left(
\sqrt{1+a^{2}}\sigma\right)  ,-\frac{1}{1+a^{2}}\sinh\left(  \sqrt{1+a^{2}%
}\sigma\right)  ,\frac{a}{\sqrt{1+a^{2}}}\sigma\right)  ,\ \ \sigma\in I.
\]

$\mathbf{ii)}$ Let the unit vector $\mathbf{c}\left(  \sigma\right)  $ be
timelike vector. We choose another initial conditions
\[
\mathbf{e}_{1}^{0}=\left(  \frac{a}{\sqrt{a^{2}-1}},0,\frac{1}{\sqrt{a^{2}-1}%
}\right)  \mathbf{,}\ \mathbf{\mathbf{e}}_{2}^{0}=\left(  0,1,0\right)
,\ \mathbf{e}_{3}^{0}=\left(  \frac{1}{\sqrt{a^{2}-1}},0,\frac{a}{\sqrt
{a^{2}-1}}\right)
\]
where $a^{2}>1.$ Then, the system $\left(  \ref{d2}\right)  $ describes a
spherical spacelike curve $\mathbf{c}:I\rightarrow H_{0}^{2}$ defined by%
\begin{equation}
\mathbf{c}\left(  \sigma\right)  =\left(  \frac{a}{\sqrt{a^{2}-1}},\frac
{1}{\sqrt{a^{2}-1}}\sin\left(  \sqrt{a^{2}-1}\sigma\right)  ,\frac{1}%
{\sqrt{a^{2}-1}}\cos\left(  \sqrt{a^{2}-1}\sigma\right)  \right)  \label{i3}%
\end{equation}
with $\mathbf{c}\left(  0\right)  =\mathbf{e}_{1}^{0},$ in the Minkowski
3-space. Solving the equation $\left(  \ref{d0}\right)  $ we obtain the
timelike curve given by%
\[
\alpha\left(  \sigma\right)  =\left(  \frac{a}{\sqrt{a^{2}-1}}\sigma,-\frac
{1}{a^{2}-1}\cos\left(  \sqrt{a^{2}-1}\sigma\right)  ,\frac{1}{a^{2}-1}%
\sin\left(  \sqrt{a^{2}-1}\sigma\right)  \right)  .
\]

$\mathbf{iii)}$ Let the unit vector $\mathbf{q}\left(  \sigma\right)  $ be
timelike vector. Choose another initial conditions
\[
\mathbf{e}_{1}^{0}=\left(  \frac{1}{\sqrt{a^{2}-1}},0,\frac{a}{\sqrt{a^{2}-1}%
}\right)  \mathbf{,}\ \mathbf{\mathbf{e}}_{2}^{0}=\left(  0,1,0\right)
,\ \mathbf{e}_{3}^{0}=\left(  \frac{a}{\sqrt{a^{2}-1}},0,\frac{1}{\sqrt
{a^{2}-1}}\right)
\]
where $a^{2}>1.$ Then, the system $\left(  \ref{d3}\right)  $ describes a
spherical spacelike curve $\mathbf{c}:I\rightarrow S_{1}^{2}$ defined by%
\begin{equation}
\mathbf{c}\left(  \sigma\right)  =\left(  \frac{1}{\sqrt{a^{2}-1}}\cosh\left(
\sqrt{a^{2}-1}\sigma\right)  ,\frac{1}{\sqrt{a^{2}-1}}\sinh\left(  \sqrt
{a^{2}-1}\sigma\right)  ,\frac{a}{\sqrt{a^{2}-1}}\right)  \label{i4}%
\end{equation}
with $\mathbf{c}\left(  0\right)  =\mathbf{e}_{1}^{0},$ in the Minkowski
3-space. Solving the equation $\left(  \ref{d0}\right)  $ we obtain the
spacelike Frenet curve given by%
\[
\alpha\left(  \sigma\right)  =\left(  \frac{1}{a^{2}-1}\sinh\left(
\sqrt{a^{2}-1}\sigma\right)  ,\frac{1}{a^{2}-1}\cosh\left(  \sqrt{a^{2}%
-1}\sigma\right)  ,\frac{a}{\sqrt{a^{2}-1}}\sigma\right)  .
\]

\end{example}

\begin{example}
Let $\alpha:I\rightarrow\mathbb{E}_{1}^{3}$ be a non-lightlike curve with
p-shape $\left(  \tilde{\kappa}_{\alpha},\tilde{\tau}_{\alpha}\right)
=\left(  1/\sigma,a\right)  $ where $a\neq0$ is real constants. Because of
$\rho\left(  \sigma\right)  =\ln\sigma,$ the parametric equation of the
non-lightlike curve $\alpha$ is given by
\[
\alpha\left(  \sigma\right)  =\left(  \frac{t\cosh t-\sinh t}{\left(
1+a^{2}\right)  ^{3/2}},\frac{\cosh t-t\sinh t}{\left(  1+a^{2}\right)
^{3/2}},\frac{at^{2}}{2\left(  1+a^{2}\right)  ^{3/2}}\right)
\]
where $t=\sqrt{1+a^{2}}\sigma.$ As in the Example $\ref{or}$ we take the same
spherical timelike curve $\mathbf{c}=\mathbf{c}\left(  \sigma\right)  $
parameterized by $\left(  \ref{i2}\right)  .$
\end{example}

\qquad Now, we study non-lightlike self-similar curves in $\mathbb{E}_{1}^{3}%
$. A non-lightlike curve $\alpha:I\rightarrow\mathbb{E}_{1}^{3}$ is called
\emph{self-similar }if any p-similarity $f\in G$ conserve globally $\alpha$
and $G$ acts transitively on $\alpha$ where $G$ is a one-parameter subgroup of
\textbf{Sim}$\left(  \mathbb{E}_{1}^{3}\right)  .$ This means that p-shape
curvatures are constant. In fact, let $p_{1}=\alpha\left(  s_{1}\right)  $ and
$p_{2}=\alpha\left(  s_{2}\right)  $ be two different points lying on
$\alpha.$ Since $G$ acts transitively on $\alpha,$ there is a similarity $f\in
G$ such that $f\left(  p_{1}\right)  =p_{2}.$ Then, we find $\tilde{\kappa
}_{\alpha}\left(  s_{1}\right)  =\tilde{\kappa}_{\alpha}\left(  s_{2}\right)
$ and $\tilde{\tau}_{\alpha}\left(  s_{1}\right)  =\tilde{\tau}_{\alpha
}\left(  s_{2}\right)  $ because of the invariance of p-shape curvatures.

\qquad Now, we shall state the parametrization of all non-lightlike
self-similar curves. Let $\alpha:I\rightarrow\mathbb{E}_{1}^{3}$ be a
non-lightlike curve with the p-shape $\left(  \tilde{\kappa}_{\alpha}%
,\tilde{\tau}_{\alpha}\right)  =\left(  b,a\right)  $ where $a\neq0$ and
$b\neq0$ are real constants. Firstly we take $\mathbf{t}\left(  \sigma\right)
$ as a timelike unit vector. Choosing initial conditions $\left(
\ref{i1}\right)  $ as in the Example $\ref{or},$ we get the same spherical
timelike curve $\left(  \ref{i2}\right)  $ which is a pseudo-circle with a
radius $1/\sqrt{1+a^{2}}$. Also, we have $\rho\left(  \sigma\right)  =\int
_{0}^{\sigma}bd\sigma=b\sigma$ for $\sigma\in I$. Solving the equation
$\left(  \ref{d0}\right)  ,$ we obtain a spacelike self-similar curve which
has the spherical arc length parametrization as the following
\[
\alpha_{\mathbf{t}}\left(  \sigma\right)  =\left(  \frac{e^{b\sigma}}{\left(
b^{2}-q^{2}\right)  }\left(  \frac{b}{q}\sinh(q\sigma)-\cosh(q\sigma)\right)
,\frac{e^{b\sigma}}{\left(  b^{2}-q^{2}\right)  }\left(  \sinh(q\sigma
)-\frac{b}{q}\cosh(q\sigma)\right)  ,\frac{a}{bq}e^{b\sigma}\right)
\]
where $q=\sqrt{1+a^{2}}.$

\qquad If we take $\mathbf{c}\left(  \sigma\right)  $ as a timelike unit
vector, by using $\left(  \ref{i3}\right)  $ we obtain similarly a timelike
self-similar curve given by
\[
\alpha_{\mathbf{c}}\left(  \sigma\right)  =\left(  \frac{a}{bn}e^{b\sigma
},\frac{e^{b\sigma}}{\left(  b^{2}-n^{2}\right)  }\left(  \frac{b}{n}%
\sin(n\sigma)-\cos(n\sigma)\right)  ,\frac{e^{b\sigma}}{\left(  b^{2}%
-n^{2}\right)  }\left(  \frac{b}{n}\cos(n\sigma)+\sin(n\sigma)\right)
\right)
\]
where $n=\sqrt{a^{2}-1}.$

\qquad If we take $\mathbf{q}\left(  \sigma\right)  $ as a timelike unit
vector, by using $\left(  \ref{i4}\right)  $ we get a spacelike self-similar
curve as the following
\[
\alpha_{\mathbf{q}}\left(  \sigma\right)  =\left(  \frac{e^{b\sigma}}{\left(
b^{2}-n^{2}\right)  }\left(  \frac{b}{n}\cosh(n\sigma)-\sinh(n\sigma)\right)
,\frac{e^{b\sigma}}{\left(  b^{2}-n^{2}\right)  }\left(  \frac{b}{n}%
\sinh(n\sigma)-\cosh(n\sigma)\right)  ,\frac{a}{bn}e^{b\sigma}\right)  .
\]

\bigskip

Hakan \c{S}im\c{s}ek and Mustafa \"{O}zdemir

Department of Mathematics

Akdeniz University

Antalya, TURKEY;

e-mail: hakansimsek@akdeniz.edu.tr.

\ \ \ \ \ \ \ \ \ \ \ mozdemir@akdeniz.edu.tr

\end{document}